\def\Cline#1#2{\@Cline#1#2\@nil}
\def\@Cline#1-#2#3\@nil{%
	\omit
	\@multicnt#1%
	\advance\@multispan\m@ne
	\ifnum\@multicnt=\@ne\@firstofone{&\omit}\fi
	\@multicnt#2%
	\advance\@multicnt-#1%
	\advance\@multispan\@ne
	\leaders\hrule\@height#3\hfill
	\cr}
\numberwithin{equation}{section}
\def\mod{\operatorname{mod}}
\def\Supp{\mathop{\text{\upshape Supp}}}
\def\GL{\operatorname {GL}}
\newcommand{\Z}{{\mathbb Z}}
\newcommand{\Q}{{\mathbb Q}}
\newcommand{\C}{{\mathbb C}}
\newcommand{\U}{{\mathcal U}}
\newcommand{\A}{\mathbb A}
\newtheorem{lemma}{Lemma}[section]
\newtheorem{proposition}[lemma]{Proposition}
\newtheorem{theorem}[lemma]{Theorem}
\newtheorem{corollary}[lemma]{Corollary}
\theoremstyle{definition}
\mathchardef\mhyphen="2D
\newcounter{todocounter}
\DeclareDocumentCommand\addreference{g}{\stepcounter{todocounter}\todo[color = blue!30, fancyline]{\thetodocounter. Add reference\IfNoValueF{#1}{: #1}}\xspace}
\DeclareDocumentCommand\checkthis{g}{\stepcounter{todocounter}\todo[color = red!50, fancyline]{\thetodocounter. Check this\IfNoValueF{#1}{: #1}}\xspace}
\DeclareDocumentCommand\fixthis{g}{\stepcounter{todocounter}\todo[color = orange!50, fancyline]{\thetodocounter. Fix this\IfNoValueF{#1}{: #1}}\xspace}
\DeclareDocumentCommand\expand{g}{\stepcounter{todocounter}\todo[color = green!50, fancyline]{\thetodocounter. Expand\IfNoValueF{#1}{: #1}}\xspace}
\title[]{Free products in the unit group of the integral group ring of a finite group}
\author{Geoffrey Janssens}
\author{Eric Jespers}
\author{Doryan Temmerman}
\address{Department of Mathematics, Vrije Universiteit Brussel,
Pleinlaan $2$, B-1050 Elsene, Belgium \newline E-mail address: {\tt geofjans@vub.ac.be},  {\tt efjesper@vub.ac.be},  {\tt dtemmerm@vub.ac.be} }
\begin{document}

\begin{abstract}
Let $G$ be a finite group and let $p$ be a prime. 
We continue  the search for  generic constructions of  free products and free monoids  in the unit group  $\mathcal{U}(\mathbb{Z}G)$ of the integral group ring $\Z G$.
For a nilpotent group  $G$ with a non-central element $g$ of order $p$,
explicit generic constructions are given of two periodic units $b_1$ and $b_2$ in
$\mathcal{U}(\mathbb{Z}G)$ such that $\langle b_1 , b_2\rangle =\langle b_1\rangle \star \langle b_2 \rangle \cong \mathbb{Z}_p
\star \mathbb{Z}_{p}$, a free product of two cyclic groups of prime order.
Moreover, if $G$ is  nilpotent of class $2$ and $g$ has order $p^n$, then also concrete generators for  free  products $\Z_{p^k} \star \Z_{p^m}$ are constructed (with $1\leq k,m\leq n $).  As an application, for finite nilpotent groups,  we obtain earlier results of Marciniak-Sehgal and Gon{\c{c}}alves-Passman. Further, for an arbitrary finite group $G$ we give generic constructions of  free monoids in $\mathcal{U}(\mathbb{Z}G)$ that generate an infinite solvable subgroup.
\end{abstract}

\maketitle


\newcommand\blfootnote[1]{%
  \begingroup
  \renewcommand\thefootnote{}\footnote{#1}%
  \addtocounter{footnote}{-1}%
  \endgroup
}

\blfootnote{\textit{2010 Mathematics Subject Classification}. Primary 16U60, 20C05, 16S34, 20E06 Secondary 20C10, 20C40. 
}
\blfootnote{\textit{Key words and phrases}.
Group ring, unit group, free product, generic units
}
\blfootnote{The second author is supported by
Onderzoeksraad of Vrije Universiteit Brussel and Fonds voor Wetenschappelijk Onderzoek (Flanders).
The first and third authors are supported by Fonds voor Wetenschappelijk Onderzoek (Flanders).
}

\section{Introduction}

The representation theory over some ring $R$ of a finite  group $G$ is nowadays studied through the modules over its group ring $RG$. In this perspective it is natural to ask whether $RG$ determines the underlying group $G$, i.e. if $RG \cong RH$ is then $G \cong H$? This question is called the isomorphism problem for group rings. In case $R = \mathbb{Z}$ the problem remained open till the surprising counterexample by Hertweck in 1999 \cite{Her}. However for many classes of groups the isomorphism problem has a positive answer over $\mathbb{Z}$. One main obstruction towards a positive solution is the lack of information on how rigid $G$ lies inside $\mathbb{Z}G$ or more precisely inside $\mathcal{U}(\mathbb{Z}G)$, the unit group of $\Z G$. Several conjectures have been made in this direction, such as the Zassenhaus conjectures, and consequently one started the search for generic constructions of elements in $\mathcal{U}(\mathbb{Z}G)$ and one investigates the algebraic structure of the group generated by these units.

Only few generic constructions are known. The most important are  the so called Bass units  and the bicylic units. With these elements at hand, it is a natural problem to determine  ``how large''  the group $B$ generated by the Bass and bicyclic units is compared to $\mathcal{U}(\mathbb{Z}G)$. Furthermore,  one would like to determine the  relations between these units. Jespers and Leal \cite{JesLeal} proved that for many finite groups $G$ the group $B$  is of finite index in $\mathcal{U}(\mathbb{Z}G)$; earlier results of this type were obtained by Ritter and Sehgal (see for example \cite{rit-seh}).  The groups $G$ excluded are those that have a non-commutative fixed point free image and  those for which  the rational group algebra  $\mathbb{Q}G$  does have an exceptional simple epimorphic image. 
The latter are  by definition the  non-commutative division algebras which are not a positive-definite quaternion algebra and matrix algebras $M_2(D)$ over a division algebra of the type   $\mathbb{Q}$,  $\mathbb{Q}(\sqrt{-d})$ or a quaternion algebra $\left( \frac{a, b}{\mathbb{Q}}\right)$.  Moreover, in \cite{EisKieVGel}, Eisele, Kiefer and Van Gelder reduced the number of exceptional cases by showing that the only cases that can occur as an epimorphic image of a rational groups algebra $\mathbb{Q}G$ are $d=1,2,3$ and $(a,b) = (-1,-1), (-1,-3), (-2,-5)$. For a state-of-the-art we refer to   \cite{EricAngel1}. 

In recent years there have been a lot of investigations on determining whether there are any non-trivial relations between two given units that are Bass units or bicyclic units.
It turns out that in many cases two such  elements generate a non-cyclic free group. 
In this context, a result of  Hartley and Pickel \cite{HarPic} states that $\mathcal{U}(\mathbb{Z}G)$ contains a non-cyclic  free group except  if $G$ is abelian or an Hamiltonian 2-group. Actually, it turns that these cases correspond with  $\mathcal{U}(\mathbb{Z}G)$ being abelian-by-finite, which on its turn is exactly the case when the unit group is solvable-by-finite (see \cite[Corollary 5.5.7]{EricAngel1}).

An explicit construction of a free subgroup of the unit group was given by Marciniak and Sehgal in 
 \cite{MarciniakSehgal}: it is shown that  any non-trivial bicyclic unit together with its image under the classical involution (which also is a bicylic unit)   generate a non-cyclic  free group. Since  then many more constructions of two bicyclic units, or two Bass units, or a Bass together with a bicyclic unit generating a free group have been discovered. For a  survey we refer to \cite{GonAngel,EricAngel2}. In \cite{GonPass}, Gon{\c{c}}alves and Passman 
 showed that $\mathcal{U}(\mathbb{Z}G)$  contains a free product $\mathbb{Z}_p \star  \mathbb{Z}$
 (with $p$ a prime number) if and only if $G$ contains a noncentral element of order $p$. 
Moreover, when this occurs, the $\mathbb{Z}_p$-part of the free product can be taken to be a suitable noncentral subgroup of $G$ of order $p$. 
The proof of this  result makes use of  earlier work of Passman \cite{Pass} on the existence in ${\rm PSL}_n(R)$ (with $R$ a commutative integral domain of characteristic zero)  of a free product $G \star \mathbb{Z}$ when  $G$ is a finite subgroup of ${\rm PSL}_n(R)$.
In the proofs of all these results, the element of infinite order is used in order to apply Tits alternative type techniques. We point out that this generator of the infinite cyclic part is only shown to exist, but no explicit constructions are obtained.

Note that if $p\neq 2$ then  a group contains $\mathbb{Z}_p \star  \mathbb{Z}$ if and only if it contains a group $\mathbb{Z}_p \star  \mathbb{Z}_p$. Hence, a natural problem is to give explicit generic constructions of units $b_1, b_2 \in \mathcal{U}(\mathbb{Z}G)$ such that $\langle b_1 , b_2 \rangle \cong \mathbb{Z}_p \star  \mathbb{Z}_p$.
As such, one also obtains a generic construction of a  unit $b=b_2 b_1  b_2$ such that $\langle b_1 , b \rangle \cong \mathbb{Z}_p \star  \mathbb{Z},$ provided $p\neq 2$.
So far, such a result has not been obtained mainly because of the lack of generic constructions of non-trivial torsion units.
However, recently in \cite{bob-free}, V. Bovdi introduced very interesting torsion units.

In this paper we make use of these  torsion units, we simply call them  Bovdi units, to give the first (generic) constructions of free products of two finite cyclic groups in $\mathcal{U}(\mathbb{Z}G)$ provided $G$ is a finite nilpotent group.
Moreover, for an arbitrary finite group $G$, we also deal with the problem of producing infinite solvable subgroups in $\mathcal{U}(\mathbb{Z}G)$ (that are not virtually nilpotent)  using non-trivial Bass and bicyclic units and construct  
generators of  a non-cyclic free submonoid in these groups.

Throughout the paper $G$ denotes a finite group and for a subset $H$ of $G$ we put 
 $\widetilde{H}= \sum_{h \in H} h\in \mathbb{Z}G $ and $\widehat{H} = \frac{1}{|H|} \widetilde{H}\in \mathbb{Q}G$.
 If $H$ is a subgroup then $\widehat{H}$ is an idempotent. In case $H=\langle g\rangle$ then we simply denote these
 elements by $\widetilde{g}$ and $\widehat{g}$ respectively. 
 The classical involution on a group algebra $KG$ is the $K$-linear map defined by $g\mapsto g^{-1}$, for $g\in G$.
 By $\omega$ we denote the augmentation map $K G \rightarrow K$. The free product of two groups $H$ and $K$ is denoted by 
$H \star K$. By $o(g)$ we denote the order of $g\in G$ and a cyclic group of order $n$ (possibly $n$ is infinite) we will denote by $C_n$.

\section{Preliminaries}\label{prelims}

In order to construct free monoids and free products of cyclic groups in $\mathcal{U}(\Z G)$, we will make use of   representations of the finite groups $G$ and hence of matrix epimorphic images of the complex group algebra $\mathbb{C}G$.  Therefore we begin with some technical lemmas on constructions of free products in matrix algebras. 
By $E_{ij}$ we denote the elementary matrix with $1$ in position (i,j) and zeroes elsewhere.
We start with three lemmas. The first and the third can  be found in \cite[pages 2,3]{EricAngel2} and the second
easily is verified.

\begin{lemma}\label{lemma_freepoint}
Suppose $z_1,z_2,w_1,w_2$ are non-zero elements in a field $F$. Then, there exists $U \in GL_2(F)$ such that $E_{12}(z_1)^U = E_{12}(w_1)$ and $E_{21}(z_2)^U = E_{21}(w_2)$ if and only if $z_1z_2 = w_1w_2$.
\end{lemma}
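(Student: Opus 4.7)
The plan is to prove the two directions separately, both reducing to a one-line computation.

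For the forward (``only if'') direction, the strategy is to produce a conjugation-invariant quantity that isolates the product $z_1z_2$. A direct computation gives
\[
E_{12}(z_1)\,E_{21}(z_2) \;=\; \begin{pmatrix} 1+z_1z_2 & z_1 \\ z_2 & 1 \end{pmatrix},
\]
whose trace equals $2+z_1z_2$. If a conjugating $U\in GL_2(F)$ exists with $E_{12}(z_1)^U=E_{12}(w_1)$ and $E_{21}(z_2)^U=E_{21}(w_2)$, then conjugating the product by $U$ shows that $E_{12}(z_1)E_{21}(z_2)$ and $E_{12}(w_1)E_{21}(w_2)$ have equal trace, giving $2+z_1z_2=2+w_1w_2$, and thus $z_1z_2=w_1w_2$.

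For the reverse (``if'') direction, I would build $U$ explicitly as a diagonal matrix. A short check shows that for $U=\mathrm{diag}(1,\lambda)$ with $\lambda\in F^\times$,
\[
U^{-1} E_{12}(z_1)\, U \;=\; E_{12}(\lambda z_1), \qquad U^{-1} E_{21}(z_2)\, U \;=\; E_{21}(\lambda^{-1} z_2).
\]
Setting $\lambda:=w_1/z_1$ (which lies in $F^\times$ since both $w_1,z_1$ are non-zero) forces the first identity $E_{12}(z_1)^U=E_{12}(w_1)$. For the second, the hypothesis $z_1z_2=w_1w_2$ rewrites as $\lambda^{-1}z_2 = (z_1/w_1)z_2 = w_2$, giving $E_{21}(z_2)^U=E_{21}(w_2)$ as required.

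There is essentially no obstacle: both directions are elementary $2\times 2$ matrix computations, and the diagonal ansatz for $U$ works verbatim thanks to the non-vanishing hypothesis on $z_1,z_2,w_1,w_2$. The only mild care is to observe that the trace argument (not the determinant, which is identically $1$ on upper/lower unipotents) is what captures the product, and that one diagonal parameter already suffices because the two equations $\lambda z_1=w_1$ and $\lambda^{-1}z_2=w_2$ are compatible precisely under $z_1z_2=w_1w_2$.
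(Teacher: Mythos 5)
Your proof is correct and complete: the trace of $E_{12}(z_1)E_{21}(z_2)=\left(\begin{smallmatrix}1+z_1z_2 & z_1\\ z_2 & 1\end{smallmatrix}\right)$ is a conjugation invariant giving the ``only if'' direction, and conjugation by $\mathrm{diag}(1,w_1/z_1)$ gives the ``if'' direction, with the hypothesis $z_1z_2=w_1w_2$ being exactly the compatibility condition for the single diagonal parameter. The paper does not prove this lemma (it cites \cite{EricAngel2}), and your argument is the standard one given there, so nothing further is needed.
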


\begin{lemma}\label{Lemma_rootsofunity}
Let $n$ be a positive integer and $\zeta_n= e^{\frac{2\pi i}{n}}$ is a complex primitive $n$-th root of unity.
If   $k$ is a positive integer such that  $k \neq 0 \mod n$ then  $| \sum_{i=0}^{k-1} \zeta_n^i| \geq 1$; and this inequality is strict if and only if $k \neq \pm 1 \mod n$.
\end{lemma}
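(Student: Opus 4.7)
The plan is to compute the sum in closed form and reduce the inequality to an elementary statement about the sine function. Since $k \not\equiv 0 \pmod n$, we have $\zeta_n^k \neq 1$ (actually we only need $\zeta_n \neq 1$, which requires $n > 1$; the case $n=1$ is vacuous since then every $k \equiv 0 \pmod n$). Using the geometric series formula,
\[
\sum_{i=0}^{k-1} \zeta_n^i = \frac{\zeta_n^k - 1}{\zeta_n - 1},
\]
so the claim reduces to showing $|\zeta_n^k - 1| \geq |\zeta_n - 1|$, with strict inequality exactly when $k \not\equiv \pm 1 \pmod n$.

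Next I would apply the standard identity $|e^{i\theta} - 1| = 2\bigl|\sin(\theta/2)\bigr|$, which transforms the desired inequality into
\[
\bigl|\sin(\pi k/n)\bigr| \;\geq\; \sin(\pi/n).
\]
Writing $k = qn + r$ with $0 \leq r < n$, we have $r \neq 0$ by hypothesis, and $|\sin(\pi k/n)| = \sin(\pi r/n)$ since $r/n \in (0,1)$. So the problem comes down to analyzing $\sin(\pi r/n)$ for $r \in \{1,2,\dots,n-1\}$.

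On the interval $(0,\pi)$ the function $\sin$ is strictly positive, strictly increasing on $(0,\pi/2]$ and strictly decreasing on $[\pi/2,\pi)$, and it is symmetric about $\pi/2$. Consequently, among the values $\pi r/n$ with $r \in \{1,\dots,n-1\}$, the minimum of $\sin(\pi r/n)$ is attained precisely at $r = 1$ and $r = n-1$, where it equals $\sin(\pi/n)$; for all other values of $r$ the inequality is strict. Translating back, $r = 1$ corresponds to $k \equiv 1 \pmod n$ and $r = n-1$ corresponds to $k \equiv -1 \pmod n$, which yields the stated equality characterisation. No step is really an obstacle here; the only point requiring minor care is checking that reduction modulo $n$ preserves the absolute value on the left-hand side, which follows from $\zeta_n^n = 1$ combined with $|e^{i\theta}-1|$ depending only on $e^{i\theta}$.
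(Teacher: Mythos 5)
Your proof is correct and complete. The paper itself supplies no argument for this lemma (it is dismissed as "easily verified"), so there is nothing to compare against; your route via the geometric series identity $\sum_{i=0}^{k-1}\zeta_n^i = \frac{\zeta_n^k-1}{\zeta_n-1}$, the identity $|e^{i\theta}-1| = 2|\sin(\theta/2)|$, and the monotonicity and symmetry of $\sin$ on $(0,\pi)$ is the standard verification, and you handle the edge cases (the vacuous $n=1$ case, the reduction of $k$ modulo $n$, and the equality characterisation at $r=1$ and $r=n-1$) correctly.
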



A useful tool for proving that the group generated by two subgroups is actually a free product is the well-known Ping-Pong Lemma.

\begin{lemma}{\emph{(Ping-Pong Lemma).}}\label{Lemma_PingPong}
Suppose $G_1$ and $G_2$ are non-trivial subgroups of a group $G$	with $|G_1| > 2$. 
If $G$ acts on a set $P$ 
and $P_1$ and $P_2$ are two non-empty and distinct subsets of $P$ such that $g(P_i)\subseteq P_j$ for every non-trivial $g\in G_i$ and $\{ i,j \}=\{ 1,2\}$ then $\langle G_1, G_2 \rangle \cong G_1 \star G_2$.
%
\end{lemma}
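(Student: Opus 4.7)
The approach is the classical ping-pong argument. By the universal property of the free product, to prove $\langle G_1, G_2 \rangle \cong G_1 \star G_2$ it suffices to check that every non-empty reduced alternating word $w = x_1 x_2 \cdots x_n$, with $x_k \in G_{a_k} \setminus \{1\}$ and the indices $a_1, \ldots, a_n \in \{1,2\}$ strictly alternating, represents a non-trivial element of $G$. I will verify this by producing a point $p \in P$ with $w(p) \neq p$.

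The first step is a conjugation reduction: since $w = 1$ is conjugation-invariant, we may replace $w$ by $h w h^{-1}$ for any $h \in G$, and the plan is to choose $h \in G_1$ so that the resulting word is reduced, of odd length, and has both its first and its last letter in $G_1$. If $w$ already has this form, nothing is needed. If $x_1, x_n \in G_2$ (forcing $n$ odd), any $h \in G_1 \setminus \{1\}$ works, producing a reduced word of length $n+2$. If $n$ is even with $x_1 \in G_1$ and $x_n \in G_2$, pick $h \in G_1$ avoiding $1$ and $x_1^{-1}$, so that $h x_1$ is a non-trivial element of $G_1$ while $x_n$ and $h^{-1}$ remain uncancelled; the mirror case $x_1 \in G_2$, $x_n \in G_1$ is treated by avoiding $1$ and $x_n$. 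In each of these four sub-cases the hypothesis $|G_1| > 2$ is exactly what allows us to discard at most two forbidden elements of $G_1$ and still find an admissible $h$.

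Once $w = x_1 \cdots x_n$ has been reduced to odd length with $x_1, x_n \in G_1$, the ping-pong tracking runs as follows. Pick $p \in P_1$, which is non-empty by hypothesis, and apply the letters of $w$ from right to left. The hypothesis on $G_1$ gives $x_n(p) \in P_2$; the hypothesis on $G_2$ then gives $x_{n-1}(x_n(p)) \in P_1$; iterating alternately and using that $n$ is odd, we obtain $w(p) \in P_2$. Choosing $p$ inside $P_1 \setminus P_2$, which is non-empty because $P_1$ and $P_2$ are distinct, we conclude that $w(p) \in P_2$ while $p \notin P_2$, so $w(p) \neq p$ and $w \neq 1$.

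I expect the principal difficulty to be the bookkeeping in the conjugation step: one must check in each of the four parity/endpoint combinations that the word obtained after conjugation is genuinely reduced and alternating with the desired outer letters, and pinpoint exactly where the inequality $|G_1| > 2$ is consumed. After that reduction is cleanly in place, the ping-pong tracking itself is a short induction on $n$.
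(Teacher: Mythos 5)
The paper itself offers no proof of this lemma (it is quoted from pages 2--3 of the cited book of Jespers and del R\'{\i}o), so there is nothing in-paper to compare with; your argument is the standard ping-pong proof. The reduction to the free-product criterion via reduced alternating words, the conjugation bookkeeping in the four endpoint/parity cases, the precise places where $|G_1|>2$ is consumed, and the right-to-left tracking $P_1\to P_2\to P_1\to\cdots\to P_2$ for an odd-length word with $G_1$-letters outermost are all correct.

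There is, however, one genuine slip at the very last step: you assert that $P_1\setminus P_2\neq\emptyset$ \emph{because $P_1$ and $P_2$ are distinct}. Distinctness only guarantees that the symmetric difference is non-empty; a priori $P_1\subsetneq P_2$ is compatible with ``non-empty and distinct'', and in that case $P_1\setminus P_2=\emptyset$ and your base point $p$ does not exist. (The symmetric escape --- starting from a point of $P_2\setminus P_1$ and conjugating so that $G_2$-letters are outermost --- is not available, since only $|G_1|>2$ is assumed.) The fact you need is true, but it must be extracted from the dynamical hypothesis rather than from distinctness alone: if $P_1\subseteq P_2$, choose a non-trivial $g\in G_2$; then $g^{-1}\in G_2$ is also non-trivial, so $g^{-1}(P_2)\subseteq P_1$, hence $P_2\subseteq g(P_1)\subseteq g(P_2)\subseteq P_1$, forcing $P_1=P_2$ and contradicting distinctness. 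Adding this one line closes the gap and makes the proof complete.
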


This Ping-Pong Lemma allows to construct a free product of cyclic groups in $GL_n(\C)$ and thus we obtain the  following Sanov-like result.

\begin{proposition}\label{Sanov_cyclic}
Let $n$ and  $m$ be positive integers and let $\zeta_n=e^{\frac{2k_n\pi i}{n}}$ and $\zeta_m= e^{\frac{2k_m\pi i}{m}}$ be primitive complex roots of unity of order $n$ and $m$ respectively 
 ($k_n$ and $k_m$ are integers such that $1\leq k_n \leq n$, $1 \leq k_m \leq m$).
Put   $z_{k_n} = \sum_{j=0}^{k_n-1} e^{\frac{2j\pi i}{n}}$, $z_{k_m} = \sum_{j=0}^{k_m-1} e^{\frac{2j\pi i}{m}}$ 
and suppose $u,v\in \C$.  If either $n$ or $m$ is not $2$ and  $|uv| \geq 4|z_{k_n}z_{k_m}|$, or $n=m=2$ and $u$ or $v$ is non-zero, then the following isomorphisms hold $$\langle \left[\begin{array}{cc} \zeta_n & u \\ 0 & 1 \end{array}\right], \left[\begin{array}{cc} \zeta_m & 0 \\ v & 1 \end{array}\right] \rangle \cong C_{i_n} \star C_{i_m} \cong \langle \left[\begin{array}{cc} 1 & u \\ 0 & \zeta_n \end{array}\right], \left[\begin{array}{cc} 1 & 0 \\ v & \zeta_m \end{array}\right] \rangle, $$ where $i_t = t$ if $t \neq 1$ and $i_1 = \infty$.

\end{proposition}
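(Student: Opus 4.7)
The plan is to apply the Ping-Pong Lemma (\cref{Lemma_PingPong}) to the natural action of $GL_2(\mathbb{C})$ on $\mathbb{P}^1(\mathbb{C})$, with $G_1=\langle A\rangle$ and $G_2=\langle B\rangle$ where $A, B$ denote the two displayed matrices; this generalises the classical Sanov argument for unipotent $2\times 2$ matrices. A routine induction gives
\[
A^j = \begin{pmatrix}\zeta_n^j & u_j \\ 0 & 1\end{pmatrix},\qquad B^j = \begin{pmatrix}\zeta_m^j & 0 \\ v_j & 1\end{pmatrix},
\]
with $u_j=u(\zeta_n^j-1)/(\zeta_n-1)=u\sum_{i=0}^{j-1}\zeta_n^i$ and analogously for $v_j$. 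For $n\ge 2$, as $j$ runs through $\{1,\dots,n-1\}$ the residue $jk_n\bmod n$ permutes $\{1,\dots,n-1\}$ (since $\gcd(k_n,n)=1$); combining the identity $|e^{i\theta}-1|=2|\sin(\theta/2)|$ with $|z_{k_n}|=\sin(\pi k_n/n)/\sin(\pi/n)$ (a consequence of \cref{Lemma_rootsofunity}), one finds
\[
\min_{1\le j\le n-1}|u_j| \;=\; \frac{|u|\sin(\pi/n)}{\sin(\pi k_n/n)} \;=\; \frac{|u|}{|z_{k_n}|},
\]
and similarly $\min|v_j|=|v|/|z_{k_m}|$. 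The case $n=1$ reduces to $\zeta_n=1$, $u_j=ju$, $z_1=1$, and the same bound $\min|u_j|=|u|$.

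For the ping-pong sets I would take
\[
P_A = \{[x{:}y]\in\mathbb{P}^1(\mathbb{C}) : |x|\ge R_1|y|\}, \qquad P_B = \{[x{:}y] : |y|\ge R_2|x|\},
\]
both non-empty (containing $[1{:}0]$ and $[0{:}1]$, respectively) and distinct, with the specific choice $R_1=|u|/(2|z_{k_n}|)$, $R_2=|v|/(2|z_{k_m}|)$. For $[x{:}y]\in P_B$ and any non-trivial $A^j$, the reverse triangle inequality together with $|x|\le |y|/R_2$ gives $|\zeta_n^j x + u_j y| \ge (|u_j|-1/R_2)|y|$, so the containment $A^j(P_B)\subseteq P_A$ reduces to $|u_j|\ge R_1+1/R_2$; with the chosen constants this becomes precisely $|uv|\ge 4|z_{k_n}z_{k_m}|$, matching the hypothesis. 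The symmetric calculation handles $B^j(P_A)\subseteq P_B$. Since by assumption at least one of $n, m$ differs from $2$, one of $|G_1|, |G_2|$ exceeds $2$, and \cref{Lemma_PingPong} (after possibly swapping the roles of the two groups) yields $\langle A,B\rangle\cong C_{i_n}\star C_{i_m}$. For the second displayed isomorphism the same ping-pong argument applies verbatim to $A' = \bigl(\begin{smallmatrix}1 & u \\ 0 & \zeta_n\end{smallmatrix}\bigr)$ and $B' = \bigl(\begin{smallmatrix}1 & 0 \\ v & \zeta_m\end{smallmatrix}\bigr)$: one checks $(A')^j = \bigl(\begin{smallmatrix}1 & u_j \\ 0 & \zeta_n^j\end{smallmatrix}\bigr)$ with the same $u_j$, and $|\zeta_n^j y|=|y|$ makes the ping-pong bounds identical.

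The case $n=m=2$ falls outside the scope of \cref{Lemma_PingPong} since neither cyclic group has order exceeding $2$; here the stated condition on $u,v$ forces $AB$ to be a non-trivial unipotent matrix of infinite order, so $\langle A,B\rangle$ is the infinite dihedral group $C_2\star C_2$. The main technical obstacle in the generic argument is the extremal situation $|uv|=4|z_{k_n}z_{k_m}|$: then $P_A$ and $P_B$ meet along the locus $|x|=R_1|y|$, $|y|=R_2|x|$, so the argument genuinely relies on the formulation of ping-pong in \cref{Lemma_PingPong}, which demands only non-emptiness and distinctness of the two subsets rather than disjointness.
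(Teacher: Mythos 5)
Your treatment of the main case (where $n$ or $m$ is not $2$) is correct, but it takes a genuinely different route from the paper. The paper first reduces to $k_n=k_m=1$: it writes $A$ and $B$ as $k_n$-th and $k_m$-th powers of matrices built from $e^{2\pi i/n}$, $e^{2\pi i/m}$ and the rescaled entries $z_{k_n}^{-1}u$, $z_{k_m}^{-1}v$, uses \Cref{lemma_freepoint} to normalise, and then plays ping-pong with M\"obius transformations on the disjoint regions $|z|<1$ and $|z|>1$, where \Cref{Lemma_rootsofunity} supplies the estimate $|\sum_{i=0}^{k-1}\zeta_n^i|\ge 1$. You instead act directly on $\mathbb{P}^1(\C)$ with the weighted cones $P_A,P_B$ and compute $\min_j|u_j|=|u|/|z_{k_n}|$ exactly, which makes the constant $4|z_{k_n}z_{k_m}|$ appear for free and avoids both \Cref{lemma_freepoint} and the root-extraction step; the price is the explicit trigonometric identity $|z_{k_n}|=\sin(\pi k_n/n)/\sin(\pi/n)$, which, to be precise, is a direct geometric-series computation rather than a consequence of \Cref{Lemma_rootsofunity}. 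Two small points you should make explicit: the hypothesis forces $z_{k_n}z_{k_m}\neq 0$, hence $u,v\neq 0$, which is what guarantees $R_1,R_2>0$, $\langle A\rangle\cong C_{i_n}$ when $n=1$, and that neither of $P_A,P_B$ contains the other (needed for ping-pong, not just ``distinct'').

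The $n=m=2$ case, however, contains a genuine error: you claim the hypothesis forces $AB$ to be a non-trivial unipotent matrix, but $AB=\left[\begin{smallmatrix}1+uv & u\\ v & 1\end{smallmatrix}\right]$ has trace $2+uv$ and is unipotent only when $uv=0$. Worse, $AB$ can have finite order: for $u=v=i$ one gets $AB=\left[\begin{smallmatrix}0 & i\\ i & 1\end{smallmatrix}\right]$ with characteristic polynomial $\lambda^2-\lambda+1$, so $o(AB)=6$ and $\langle A,B\rangle$ is a dihedral group of order $12$, not $C_2\star C_2$. So this step does not just have a gap --- it cannot be repaired as stated, because the proposition itself fails for such $u,v$. (The paper's own proof of this case has the same flaw: it concludes that $\langle A,B\rangle$ is the \emph{infinite} dihedral group from the relation $B(AB)B=(AB)^{-1}$ alone, without verifying that $AB$ has infinite order.) The conclusion $\langle A,B\rangle\cong C_2\star C_2$ is correct precisely when $AB$ has infinite order --- in particular whenever exactly one of $u,v$ is non-zero, which is the only situation needed elsewhere in the paper --- and your argument goes through verbatim once that additional hypothesis is imposed.
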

\begin{proof}
It is clear that 
   $\langle \left[\begin{array}{cc} \zeta_n & u \\ 0 & 1 \end{array}\right], 
     \left[\begin{array}{cc} \zeta_m & 0 \\ v & 1 \end{array}\right] \rangle 
      \cong
       \langle \left[\begin{array}{cc} 1 & 0 \\ u & \zeta_n \end{array}\right], 
       \left[\begin{array}{cc} 1 & v \\ 0 & \zeta_m \end{array}\right] \rangle$.
So we only will  prove the first isomorphism.

Put  $A = \left[\begin{array}{cc} \zeta_n & u \\ 0 & 1 \end{array}\right] \quad \text{and} \quad B =\left[\begin{array}{cc} \zeta_m & 0 \\ v & 1 \end{array}\right]$. Clearly,  $\langle A \rangle \cong C_{i_n}$ and $\langle B \rangle \cong C_{i_m}$.

We first deal with the case that either $n$ or $m$ is not $2$ and  $|uv| \geq 4|z_{k_n}z_{k_m}|$. 
Suppose first that $k_n = k_m = 1$ and thus  $1=z_{k_n} = z_{k_m}$.
By \Cref{lemma_freepoint} we may assume 
 $|u| \geq 2|z_{k_n}| = 2$ and $|v|\geq 2|z_{k_m}| = 2$. The group $\langle A,B\rangle$ acts via 
 M\"obius transformations on $\C$. The M\"obius transformations  determined by  $A$ and $B$ are resprectively:
 $$\varphi_A : z \mapsto \zeta_n z + u \quad \mbox{ and } \quad  \varphi_B : z \mapsto \frac{\zeta_m z}{vz +1}. 
 $$ 
One easily verifies that for any positive integer $k$ the following equalities hold: 
$$\varphi_A^k(z) = \zeta_n^k z + \sum_{i=0}^{k-1} \zeta_n^i u \quad \mbox{ and } \quad   \varphi_B^k(z) = \frac{\zeta_m^k z}{\left(\sum_{i=0}^{k-1}\zeta_m^i\right) vz + 1}.$$ 
Consider in $\C$ the following regions $P_1 = \{z \in \C \mid |z| < 1\}$ and $P_2 = \{ z \in \C \mid |z| > 1\}$ of $\C$.
If $z \in P_1$ and $1\leq k < n$ (or $k$ a positive integer if $n = 1$), then by \Cref{Lemma_rootsofunity} $|\sum_{i=0}^{k-1} \zeta_n^i| \geq 1$ and hence
    \begin{eqnarray*}
         |\varphi_A^k(z)| &=& |\zeta_n^k z + \sum_{i=0}^{k-1} \zeta_n^i u| 
                                        \; \geq\;  |\sum_{i=0}^{k-1} \zeta_n^iu| - |\zeta^k_n z| 
                                        \; \geq \; |u| - |z| 
                                        \;  > \;1.
    \end{eqnarray*} 
 So,  $(\langle A \rangle \setminus \{ 1 \}) (P_1) \subseteq P_2$. 
 Similarly one shows that
    $(\langle B \rangle \setminus \{ 1 \}) (P_2) \subseteq P_1$. 
\Cref{Lemma_PingPong} (here we use that either $n$ or $m$ is not $2$) yields that $$\langle A,B \rangle = \langle A \rangle \star \langle B \rangle \cong C_{i_n} \star C_{i_m}. $$

Next we consider  arbitrary $\zeta_n$ and $\zeta_m$, for $n \neq 1 \neq m$ (and  we still work under the assumption  $n\neq 2$ or $m\neq2$). Clearly,
     $$\left[\begin{array}{cc} e^{\frac{2\pi i}{n}} & z^{-1}_{k_n}u \\ 0 & 1 \end{array}\right]^{k_n} = A 
         \quad \mbox{ and } \quad
         \left[\begin{array}{cc} e^{\frac{2\pi i}{m}} & 0\\ z^{-1}_{k_m}v  & 1 \end{array}\right]^{k_m} = B.$$ 
Because  $|z^{-1}_{k_n}uz^{-1}_{k_m}v| \geq \left| \frac{4|z_{k_n}z_{k_m}|}{z_{k_n}z_{k_m}}\right| = 4$, the conditions for the first part of the proof are satisfied for the matrices 
$\left[\begin{array}{cc} e^{\frac{2\pi i}{n}} & z^{-1}_{k_n}u \\ 0 & 1 \end{array}\right]$ and $\left[\begin{array}{cc} e^{\frac{2\pi i}{m}} & 0\\ z^{-1}_{k_m}v  & 1 \end{array}\right]$. Hence, they generate a group isomorphic to $C_{i_n} \star C_{i_m}$. 
Since $k_n$ and $k_m$ are coprime to $n$ and $m$ respectively,  $\langle A, B\rangle = \langle \left[\begin{array}{cc} e^{\frac{2\pi i}{n}} & z^{-1}_{k_n}u \\ 0 & 1 \end{array}\right],\left[\begin{array}{cc} e^{\frac{2\pi i}{m}} & 0\\ z^{-1}_{k_m}v  & 1 \end{array}\right]\rangle$ and thus the result follows in this case.

The case where $n=m=2$ has a more direct proof. Indeed, in this case 
 $A = \left[\begin{array}{cc} -1 & u \\ 0 & 1 \end{array}\right]$ and  
 $B =\left[\begin{array}{cc} -1 & 0 \\ v & 1 \end{array}\right]$, and these are  elements of  order $2$. 
 Also, $B^{-1} \left(AB\right) B = BABB = BA = (AB)^{-1}$, showing that $B$ acts on $AB$ by inversion. 
 So, $\langle A,B\rangle$ is the infinite dihedral group and thus isomorphic with $C_2 \star C_2$.
\end{proof}

The following lemma is  an easy consequence of the proposition but it is essential for the remainder of the paper.

\begin{lemma} \label{prop: generalised Sanov_cyclic}
We use  notations as in \Cref{Sanov_cyclic}.
Let $r$ be a positive integer different from $2$.
Suppose $A=(A_{ij}) \in \GL_r(\C)$  is a lower triangular matrix of order $i_n$ with $A_{11}=1$ and 
$A_{22}=\zeta_n$  and $B=D+dE_{12}\in \GL_r(\C)$  a matrix of order $i_m$ with 
$D$ a diagonal matrix, $D_{11}=1$ and $D_{12}=\zeta_m $. 
If  $|A_{21}d| \geq 4|z_{k_n}z_{k_m}|$, 
then $\langle A,B \rangle \cong C_{i_n} \star C_{i_m}$.

\end{lemma}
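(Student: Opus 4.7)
The plan is to reduce the statement to the $2\times 2$ case \Cref{Sanov_cyclic} by finding a two-dimensional quotient representation on which $A$ and $B$ act via matrices of the shape already handled there.

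First I would verify that $W = \mathrm{span}_{\mathbb{C}}(e_3,\ldots,e_r)$ is invariant under both $A$ and $B$. Lower triangularity of $A$ gives $Ae_j \in \mathrm{span}(e_j,\ldots,e_r) \subseteq W$ for $j\geq 3$, while $B = D + dE_{12}$ satisfies $Be_j = D_{jj}e_j \in W$ for $j \geq 3$ (the only off-diagonal entry of $B$ sits in row $1$, column $2$). Hence both matrices descend to the quotient $\mathbb{C}^r/W$, which has basis $\bar e_1,\bar e_2$. Reading the first two columns of $A$ and $B$ and discarding the components in $W$ yields
\[
\bar A = \begin{pmatrix} 1 & 0 \\ A_{21} & \zeta_n \end{pmatrix}, \qquad \bar B = \begin{pmatrix} 1 & d \\ 0 & \zeta_m \end{pmatrix}.
\]

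Next I would apply \Cref{Sanov_cyclic}, in the form of its second displayed isomorphism with the roles of $n$ and $m$ interchanged, to the pair $(\bar A, \bar B)$. Since $|z_{k_n}z_{k_m}|$ is symmetric in $n$ and $m$, the hypothesis $|A_{21}d| \geq 4|z_{k_n}z_{k_m}|$ is exactly what is required; the degenerate case $n=m=2$ is automatically covered since this bound forces $A_{21},d \neq 0$. A brief direct computation of powers, using $\sum_{i=0}^{n-1}\zeta_n^i = 0$ and $A_{21}, d \neq 0$, further confirms that $\bar A$ and $\bar B$ have orders exactly $i_n$ and $i_m$. \Cref{Sanov_cyclic} then gives $\langle \bar A, \bar B\rangle \cong C_{i_n}\star C_{i_m}$, with $\bar A$ and $\bar B$ as the natural generators of the two free factors.

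Finally I would lift the conclusion back to $\langle A, B\rangle$ itself. Since $A$ and $B$ have orders $i_n$ and $i_m$, the universal property of the free product produces a surjection $\phi\colon C_{i_n}\star C_{i_m} \twoheadrightarrow \langle A,B\rangle$ sending the free generators to $A$ and $B$. Composing with the natural reduction $\pi\colon \langle A,B\rangle \twoheadrightarrow \langle \bar A, \bar B\rangle$ and with the isomorphism $\langle \bar A, \bar B\rangle \cong C_{i_n}\star C_{i_m}$ produces an endomorphism of $C_{i_n}\star C_{i_m}$ fixing each free generator, and hence equal to the identity. This forces $\phi$ to be injective, so $\langle A,B\rangle \cong C_{i_n}\star C_{i_m}$ as required. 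The only slightly non-routine point is this final order-matching and factor-identification step; everything else is bookkeeping and an invocation of the previous proposition.
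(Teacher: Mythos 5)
Your proposal is correct and follows essentially the same route as the paper: the paper's homomorphism $R\colon L_r\to \GL_2(\C)$ extracting the top-left $2\times 2$ block is precisely the quotient representation on $\C^r/\mathrm{span}(e_3,\dots,e_r)$ that you construct, followed by the same appeal to \Cref{Sanov_cyclic} and the same order-matching argument to lift the free product back to $\langle A,B\rangle$. Your write-up is in fact slightly more explicit than the paper's on two points it leaves implicit, namely the swap of the roles of $n$ and $m$ when matching against the displayed matrices of \Cref{Sanov_cyclic} and the universal-property argument in the final lifting step.
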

\begin{proof}
Let  $L_r=\{ (a_{ij}) \in GL_r(\C) \mid a_{ij} = 0 \text{ if } j>i \text{ and } j>2\}$, a subgroup  of $GL_r(\C)$. 
Obviously the map  $R: L_r \rightarrow GL_2(\C)$ defined by  $(a_{ij}) \mapsto \left[\begin{array}{cc} a_{11}&a_{12} \\ a_{21} & a_{22} \end{array}\right]$ is a group homomorphism.
Clearly, $A$ and $B$ are in $L_r$. Also, by the assumptions,  
$R(A)=  \left[\begin{array}{cc}1&0 \\ A_{21}&\zeta_n \end{array}\right] $ 
and $R(B) =\left[\begin{array}{cc}1&d \\ 0&\zeta_m \end{array}\right] $.
The assumption  $|A_{21}d| \geq 4|z_{k_n}z_{k_m}|$ and \Cref{Sanov_cyclic}  yield that 
$\langle R(A),R(B) \rangle \cong C_{i_n} \star C_{i_m}$.
 Since $A$ has order $i_n$ and $B$ has order $i_m$ we obtain that $<A, B> \cong C_{i_n} \star C_{i_m}$.
\end{proof}

\section{Solvable subgroups and Free sub-semigroups}

In this section we first construct subgroups of $\U (\Z G)$ that are  abelian-by-finite and solvable of length $2$.
We will give explicit generators for such groups; the construction of these generators  is  a generalisation of the  bicylic units (seen from another perspective, they are products of a bicylic unit with a trivial unit).
Modifying the bicyclic units differently, we next will construct solvable-by-finite subgroups of $\U (\Z G)$ that are 
not nilpotent-by-finite. 
By the well known result of Rosenblatt \cite{Ros} such groups  contain a free submonoid of rank $2$. 
Actually, we  give explicit generators of such monoids.

We begin by recalling  some definitions and notations (we use the same notations as in \cite{EricAngel1}). 
The bicyclic  units in $\Z G$ are the unipotent units of the type
  $$b(g,\tilde{h}) = 1+(1-h)g\tilde{h} \quad \mbox{ and  } \quad b(\tilde{h},g) =1+\tilde{h}g(1-h),$$
where $g,h\in G$. These units are non-trivial(i.e. they do not belong to $G$) if $g\not\in N_{G}(\langle h\rangle)$, the normalizer of $\langle h \rangle$ in $G$. The Bass units are the units of the  type
 $$u_{k,m}(g) = (1+ g + \cdots g^{k-1})^{m} + \frac{1-k^{m}}{o(g)}\tilde{g},$$
where  $g\in G$  and $k,m$ are  positive integers such that $k^{m}\equiv 1 \mbox{ mod } o(g)$, and $1\leq k< n$.
In \cite{bob-free} V. Bovdi introduced the following beautiful new units in $\Z G$ by modifying the bicyclic unit construction and as such constructed units that are often periodic units:
  $$b_k(g,\tilde{h}) = h^k + (1-h) g \widetilde{h} \quad \mbox{ and }  \quad
  b_k(\tilde{h},g) = h^k + \widetilde{h} g (1-h),$$ 
with  $g,h\in G$ and $k$  a positive integer.  Clearly $b_{o(h)}(g,\tilde{h})=b(g,\tilde{h})$,
$b_{o(h)}(\tilde{h},g)=b(\tilde{h},g)$, $b_{k}(g,\tilde{h})=h^{k} b(h^{-k}g,\tilde{h})$ and $b_{k}(\tilde{h},g)=h^{k}b(\tilde{h},h^{-k}g) =h^{k}b(\tilde{h},g)$.
So, again these units are non-trivial precisely when $g$ does not belong to  $N_G(\langle h\rangle )$.
We call these units the \emph{Bovdi units}.  
Also, as remarked in Problem 4 in \cite{JesMarNebKim}, if a unit $b_k(g,\tilde{h}) = h^k + (h-1) g \widetilde{h}$ (respectively $b_k(\tilde{h},g) = h^k + \widetilde{h} g (h-1)$) 
is of finite order then it is  rationally conjugate to $h^{k}$. This easily follows from Lemma 37.6 in \cite{Seh-book}, which says that two elements of $\mathcal{U}(\Z G)$ of the same finite order are rationally conjugate if all irreducible representations of $G$ coincide on both elements.
We recall a lemma from \cite{bob-free} where it is shown when a Bovdi unit is of finite order.  For completeness' sake a proof is included.

\begin{lemma}\label{lemma_order}
Let   $g,h \in G$ and suppose that  $g \not\in N_{G}( \langle h \rangle)$.
Suppose $m$ is the smallest positive integer  such that $g \in N_G( \langle h^m \rangle )$.
For an integer $1 \leq k < o(h)$, the elements $b_k(g,\tilde{h})$ and $b_k(\tilde{h},g)$ are units in $\Z G$ and \begin{enumerate}
\item if $(k,m)=1$, then $o(b_k(g,\tilde{h}))= o(b_k(\tilde{h},g)) = o(h^k)$,
\item else both $b_k(g,\tilde{h})$ and $b_k(\tilde{h},g)$ have infinite order.
\end{enumerate}
\end{lemma}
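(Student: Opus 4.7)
First, I would use induction on $n$ together with the identities $\tilde h(1-h)=0$ and $\tilde h\, h^k=\tilde h$ to establish the explicit formula
\begin{equation*}
b_k(g,\tilde h)^n \;=\; h^{nk} \;+\; (1-h)\Bigl(\sum_{i=0}^{n-1} h^{ik}\Bigr) g\tilde h \qquad (n\geq 1).
\end{equation*}
The inductive step works because $\tilde h\cdot (1-h)g\tilde h=0$ makes the cross term vanish and $h^{nk}$ commutes with $1-h$. In particular $b_k(g,\tilde h)^n=1$ in $\Z G$ forces (a) $h^{nk}=1$, i.e.\ $o(h^k)\mid n$; writing $n=o(h^k)j$, the sum collapses to $j\widetilde{\langle h^k\rangle}$, and after cancelling $j$ in $\Q G$ the remaining condition becomes (b) $(1-h)\widetilde{\langle h^k\rangle}\, g\,\tilde h=0$. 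Since (b) does not involve $j$, $b_k(g,\tilde h)$ is periodic iff $b_k(g,\tilde h)^{o(h^k)}=1$ iff (b) holds, in which case $o(b_k(g,\tilde h))\mid o(h^k)$.

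Second, I would translate (b) into a set-theoretic statement. Put $d=\gcd(k,o(h))$ so that $\langle h^k\rangle=\langle h^d\rangle$, and set $A=\langle h^d\rangle$, $B=\langle h\rangle$. The standard identity $\widetilde{A}\,g\,\widetilde{B}=|A\cap gBg^{-1}|\,\widetilde{AgB}$ reduces (b) to $h\widetilde{AgB}=\widetilde{AgB}$, i.e.\ $hAgB=AgB$ as subsets of $G$. Because $AgB$ is right $B$-invariant and $A$ commutes elementwise with $h$, a direct check shows this is equivalent to $h\in A\cdot gBg^{-1}$, and since both $h$ and $A$ lie in $\langle h\rangle$, to
\[
h\langle h^d\rangle\,\cap\,\bigl(\langle h\rangle\cap g\langle h\rangle g^{-1}\bigr)\ne\emptyset.
\]

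The combinatorial core is then the identification $\langle h\rangle\cap g\langle h\rangle g^{-1}=\langle h^m\rangle$. This intersection is a cyclic subgroup of $\langle h\rangle$, say $\langle h^s\rangle$ with $s\mid o(h)$. From $g\in N_G(\langle h^m\rangle)$ one gets $\langle h^m\rangle\subseteq\langle h\rangle\cap g\langle h\rangle g^{-1}=\langle h^s\rangle$, so $s\mid m$; conversely $g^{-1}\langle h^s\rangle g\subseteq\langle h\rangle$ is a cyclic subgroup of order $o(h)/s$ of $\langle h\rangle$, hence equals $\langle h^s\rangle$, giving $g\in N_G(\langle h^s\rangle)$ and so $m\mid s$ by minimality of $m$. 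Hence $s=m$, and (b) becomes $h\langle h^d\rangle\cap\langle h^m\rangle\ne\emptyset$, which inside the cyclic group $\langle h\rangle$ is equivalent to $h\in\langle h^d\rangle\cdot\langle h^m\rangle=\langle h^{\gcd(d,m)}\rangle$, i.e.\ $\gcd(d,m)=1$. Since $m\mid o(h)$, $\gcd(d,m)=\gcd(k,m)$, and the dichotomy (1)/(2) is established modulo proving $o(b_k(g,\tilde h))=o(h^k)$ in the periodic case.

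For that last equality I would look at the regular representation $\rho=\rho_{\mathrm{reg}}$ of $G$: in a basis adapted to $V=V^{\langle h\rangle}\oplus(V^{\langle h\rangle})^\perp$ the matrix $\rho(b_k(g,\tilde h))$ is block lower-triangular with diagonal blocks $I$ and $\rho(h)^k|_{(V^{\langle h\rangle})^\perp}$; the latter has order exactly $o(h^k)$ because $\rho(h)$ realizes every non-trivial $o(h)$-th root of unity as an eigenvalue. Hence $b_k(g,\tilde h)^n=1$ forces $o(h^k)\mid n$, giving equality. For $b_k(\tilde h,g)$ one applies the classical involution of $\Z G$: a short computation gives $b_k(\tilde h,g)^{*}=h^{-k}+(1-h^{-1})g^{-1}\tilde h$, which is the Bovdi unit of the first type attached to $(h^{-1},g^{-1})$. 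The invariant $m$ is unchanged (since $g\in N_G(\langle h^j\rangle)\Leftrightarrow g^{-1}\in N_G(\langle h^j\rangle)$) and orders are preserved by the involution, so the conclusion for $b_k(g,\tilde h)$ transfers. The main obstacle is strictly the combinatorial step of identifying (b) with $\gcd(k,m)=1$; the rest is bookkeeping.
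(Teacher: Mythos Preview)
Your proof is correct and reaches the same conclusion as the paper, but by a genuinely different route in two places.

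For the divisibility $o(h^k)\mid o(b_k(g,\tilde h))$, the paper uses a one-line support argument: since $g\notin\langle h\rangle$, the term $(h^k)^t$ cannot be cancelled by anything in $\left(\sum_{i}(h^k)^i\right)(1-h)g\tilde h$, so it lies in the support of $b_k(g,\tilde h)^t$. You assert (a) at first without this justification and then recover it at the end via the regular representation; the representation argument is correct but heavier than needed. (The implicit support step is also what makes your splitting of $b_k(g,\tilde h)^n=1$ into conditions (a) and (b) legitimate.)

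For the heart of the lemma, the paper computes $b_k(g,\tilde h)^{o(h^k)}$ explicitly, collapses the sum to a multiple of $\sum_{i=0}^{m/(k,m)-1}(h^{(k,m)})^i(1-h)$, and then for $(k,m)=1$ observes this telescopes to $(1-h^m)g\tilde h=0$ because $g\in N_G(\langle h^m\rangle)$; for $(k,m)\neq 1$ it invokes Berman--Higman to force the same sum to vanish and derives a contradiction with the minimality of $m$. Your approach instead rewrites (b) as a double coset condition $hAgB=AgB$, proves $\langle h\rangle\cap g\langle h\rangle g^{-1}=\langle h^m\rangle$ from the minimality of $m$ (one small slip: minimality gives $m\le s$, not $m\mid s$, but combined with $s\mid m$ you still get $s=m$), and reduces everything to $h\in\langle h^{\gcd(d,m)}\rangle$. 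This is more structural and treats (1) and (2) uniformly, at the cost of the extra lemma identifying the intersection; it avoids Berman--Higman entirely.

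Finally, the paper handles $b_k(\tilde h,g)$ by ``similarly'', whereas your use of the classical involution to transport the result from $b_k(g,\tilde h)$ is cleaner and makes the symmetry explicit.
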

\begin{proof}
Remark that for a positive integer $t$ we have 
 $$b_k(g,\tilde{h})^t = (h^k)^t + \left(\sum_{i=0}^{t-1} (h^k)^i\right) (1-h) g \tilde{h}.$$ 
 Because  $g \not\in \langle h \rangle$, we have that 
$(h^k)^t \notin \Supp ( \left(\sum_{i=0}^{t-1} (h^k)^i\right) (1-h) g \tilde{h})$ and thus  $(h^k)^t \in \Supp (b_k(g,\tilde{h})^t).$
In particular, if $b_k(g,\tilde{h})$ is torsion, then $(h^k)^{o(b_k(g,\tilde{h}))} = 1$ and $o(h^k) \mid o(b_k(g,\tilde{h}))$.

Note that the minimality assumption on $m$ implies that    $m | o(h)$. A straightforward calculation then yields 
\begin{eqnarray}
\quad \quad  \quad \quad \quad (b_k(g,\tilde{h}))^{o(h^k)} &=& (h^k)^{o(h^k)} + \left(\sum_{i=0}^{o(h^k)-1} (h^k)^i\right) (1-h) g \tilde{h} ,
 \nonumber \\ 
 &=& 1 + t \left(\sum_{i=0}^{\frac{m}{(k,m)}-1} (h^{(k,m)})^i\right) (1-h) g \tilde{h},  \quad  \quad \quad \quad \quad \quad \quad  \quad \quad \quad\quad (3.1)  \label{tk}
\end{eqnarray}
for some positive integer $t$.

To prove part (1), assume $(k,m)=1$.  Since $g\in N_{G}(\langle h^m \rangle )$, we then get that 
$\left(\sum_{i=0}^{\frac{m}{(k,m)}-1} (h^{(k,m)})^i\right) (1-h) g \tilde{h} = (1-h^m)g\tilde{h} = 0$.
Hence,  $(b_k(g,\tilde{h}))^{o(h^k)} =1$ and thus $o(b_k(g,\tilde{h})) | o(h^{k})$. So, by the above $o(b_k(g,\tilde{h})) =o(h^{k})$.
This proves part (1).

To prove part (2),  assume  $(k,m) \neq 1$. We give a proof by contradiction. So  suppose 
$b_k(g,\tilde{h})$ has finite order. 
By the first part of the proof,  $1=(h^k)^{o(h^k)}  \in \Supp (b_k(g,\tilde{h})^{o(h^k)})$.
The Berman-Higman Theorem therefore yields that  $(b_k(g,\tilde{h}))^{o(h^k)} = 1$ . 
So, by (\ref{tk}),
 $\left(\sum_{i=0}^{\frac{m}{(k,m)}-1} (h^{(k,m)})^i\right) (1-h) g \tilde{h} = 0$. 
Thus,  $g\in N_{G}(\langle h^{(k,m)i+1})$ for some $0\leq i < \frac{m}{(k,m)}$;
in  contradiction with the minimality of $m$.
This proves part (2)  for $b_k(g,\tilde{h})$.
The proof for $b_k(\tilde{h},g)$ is similar.
\end{proof}

\begin{proposition}\label{virtualabelian}
Let $G$ be a finite group and $g,h\in G$.
Let $u=b_k(g,\tilde{h})$ be a Bovdi unit and $w=b(g,\tilde{h})$ a bicyclic unit.
Then, the group $\langle u, u^w \rangle$ is abelian-by-finite and solvable of length at most $2$. 
Similarly, if $b_k (\tilde{h},g)$ is a Bovdi unit and $v= b(\tilde{h},g)$ then  $\langle b_k (\tilde{h},g), (b_{k} (\tilde{h},g))^{v}\rangle$ is an abelian-by-finite (and solvable) group.
\end{proposition}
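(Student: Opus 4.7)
The plan is to embed $\langle u, u^w\rangle$ inside the unit group of a small auxiliary subring $R \subseteq \Z G$ whose two-step structure---an abelian nilpotent ideal over $\Z \langle h\rangle$---is completely transparent.

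Setting $\eta := (1-h)g\widetilde{h}$, one has $u = h^k + \eta$ and $w = 1+\eta$. The identity $\widetilde{h}(1-h) = 0$ gives $\eta^2 = 0$, hence $w^{-1}=1-\eta$; and $\widetilde{h}h^{k} = \widetilde{h}$ gives $\eta h^k = \eta$, so $u = (1+\eta)h^k = wh^k$. Consequently
\begin{equation*}
u^w \;=\; w^{-1}(wh^k)w \;=\; h^k w \;=\; h^k + h^k\eta,
\end{equation*}
so both $u$ and $u^w$ lie in the $\Z$-submodule $R := \Z\langle h\rangle + \Z\langle h\rangle\,\eta$ of $\Z G$.

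Using $\widetilde{h}f = \omega(f)\widetilde{h}$ for $f\in\Z\langle h\rangle$, one deduces $\eta f = \omega(f)\eta$, and combining with $\eta^2 = 0$ this shows $R$ is a subring and $I := \Z\langle h\rangle\,\eta$ is a two-sided ideal of $R$ with $I^2=0$. A support argument ($\supp(f\eta) \subseteq \langle h\rangle g \langle h\rangle$, disjoint from $\langle h\rangle$ whenever $g\notin\langle h\rangle$; the case $\eta=0$ being trivial) shows the sum defining $R$ is direct, so there is a split ring epimorphism $\pi\colon R \twoheadrightarrow \Z\langle h\rangle$ with kernel $I$. Since $I^2=0$, the subgroup $K := 1+I$ of $\mathcal{U}(R)$ is abelian (in fact isomorphic to the additive group of $I$).

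Now $\pi(u) = \pi(u^w) = h^k$, so restricting $\pi$ to $\langle u,u^w\rangle$ yields an exact sequence
\begin{equation*}
1 \longrightarrow N \longrightarrow \langle u, u^w\rangle \longrightarrow \langle h^k\rangle \longrightarrow 1
\end{equation*}
with $N := \langle u,u^w\rangle \cap K$ abelian and $\langle h^k\rangle$ finite cyclic. This exhibits $\langle u, u^w\rangle$ as abelian-by-finite; since the quotient is abelian, the derived subgroup lies inside the abelian $N$, so the group is metabelian. The companion statement follows by the symmetric argument with $\eta' := \widetilde{h}g(1-h)$ and the right-handed subring $R' := \Z\langle h\rangle + \eta'\,\Z\langle h\rangle$, using the identity $b_k(\widetilde{h},g) = h^k v$ to see that both generators again lie in $R'$ and project to $h^k$. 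The only piece of bookkeeping---essentially the only place where anything could go wrong---is the verification that $R$ is a subring and that $\pi$ is a ring map; both reduce to the identities $\widetilde{h}(1-h)=0$ and $\widetilde{h}f = \omega(f)\widetilde{h}$, so no deeper obstacle is anticipated.
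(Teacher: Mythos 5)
Your argument is correct, and it arrives at the same structural conclusion as the paper---an abelian normal subgroup with finite cyclic quotient generated by the image of $h^{k}$---but via a different decomposition. The paper works in $\Q G$ and uses the Pierce decomposition with respect to the idempotent $e=\widehat{h}$, writing $\Q G$ in $2\times 2$ block form and observing that $u=wh^{k}$ and $u^{w}=h^{k}w$ land in the block-triangular group with diagonal entries in $\langle h\rangle (1-e)$ and $e$ and abelian unipotent part $1+(1-e)\Q G e$. You instead stay inside $\Z G$ and build the explicit subring $R=\Z\langle h\rangle\oplus\Z\langle h\rangle\,\eta$ with square-zero ideal $I=\Z\langle h\rangle\,\eta$, then use the split surjection $R\twoheadrightarrow\Z\langle h\rangle$. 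Both proofs hinge on the same computations ($\eta^{2}=0$, $\eta f=\omega(f)\eta$, $u=wh^{k}$, $u^{w}=h^{k}w$); what yours buys is that it is entirely integral and idempotent-free, and it pins down the finite quotient as $\langle h^{k}\rangle$ rather than $\langle h\rangle$, which is marginally sharper. One small point you leave implicit: to restrict $\pi$ to $\langle u,u^{w}\rangle$ you need the whole group, not just the two generators, to lie in $R$; this is immediate since $u^{-1}=h^{-k}(1-\eta)=h^{-k}-h^{-k}\eta$ and $(u^{w})^{-1}=(1-\eta)h^{-k}=h^{-k}-\eta$ also lie in $R$, but it deserves a sentence. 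The same remark applies to the right-handed version with $\eta'=\widetilde{h}g(1-h)$.
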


\begin{proof}
We prove the first part. Put $n = o(h)$.
Write the Pierce decomposition of $\Q G$ with respect to the non-central idempotent $e=\widehat{h}$ in a natural matrix form as follows:
 $$\Q G = \left[\begin{array}{cc} (1-e)\Q G(1-e) & (1-e)\Q Ge \\ e\Q G (1-e) & e\Q Ge\end{array}\right]. $$
Note that $u = wh^k$ and $u^w = h^kw$, 
$w = \left[\begin{array}{cc} 1-e &  (1-h)nge \\ 0 & e\end{array}\right]$ and 
$h^k= \left[\begin{array}{cc} h^k(1-e) & 0\\ 0 & e\end{array}\right]$.
Hence 
$$u = \left[\begin{array}{cc} h^k(1-e) & (1-h)nge \\ 0 & e\end{array}\right] \quad 
\mbox{ and } \quad u^w = \left[\begin{array}{cc}  h^k(1-e) &  h^k (1-h)nge \\ 0 & e \end{array}\right] .$$
So, $\langle u,u^w \rangle$ is a subgroup of  $\left[\begin{array}{cc}\langle h \rangle (1-e) & (1-e)\Q Ge \\ 0 & e\end{array}\right] = H$. Clearly, this group $H$ contains  the abelian group 
$N = \left[\begin{array}{cc} 1-e & (1-e)\Q Ge  \\ 0&e\end{array}\right]$ 
as a normal subgroup and  $H/N \cong \langle h \rangle$.
So,  $H$, and thus also $\langle u,u^w \rangle$, is an abelian-by-finite group.  The result follows.
%
\end{proof}

Note that the group constructed in the previous proposition is finite if the Bovdi unit is trivial. Furthermore it is infinite cyclic if $h^k=1$.

The proposition implies that  $\langle u, u^w \rangle$ does not contain a free group or free monoid (since nilpotent-by-finite groups have polynomial growth)  if $u$ is not a trivial Bovdi unit and $h^k \neq 1$. In particular $ \langle u,u^w \rangle \ncong C_t \star C_t$ for $t\geq 3$ or $t=\infty$ since otherwise it would contain $\langle u u^w, u^w u \rangle$, a free group of rank $2$. We point this out due to \cite{bob-free} where it is asserted that $ \langle u,u^w \rangle \cong C_t \star C_t$.
However in the case $o(h)=2$ and $k$ is odd,  then $\langle u, u^w \rangle \cong C_2 \star C_2$. Indeed, if $o(h)=2$ then by \Cref{lemma_order} also $o(u^w)=o(u) = 2$.
Moreover, $u u^w = wh^{2k}w= w^2$ has infinite order. Since $(u^w)^{-1}(u u^w)u^w = u^w u = (u u^w)^{-1}$ we get that  $\langle u, u^w \rangle = \langle u u^w, u^w \rangle \cong \Z \rtimes C_2 \cong C_2 \star C_2$ as claimed.

In order to construct free monoids in $\U (\Z G)$ we generalise  the Bovdi units via the use of an arbitrary   unit in $\Z \langle h\rangle$ (instead of $h^k$).
More precisely, for $g,h\in G$  and $ u(h)\in \U (\Z \langle h\rangle)$ define
 $$b(u(h), g,\tilde{h})= u(h) + (1-h)g\widetilde{h} \quad \mbox{ and } \quad b(u(h),\tilde{h},g)= u(h) +\tilde{h}g(1-h).$$
Clearly,  $b(u(h), g,\tilde{h})=u(h) (1+(1-h) (u(h))^{-1} g \widetilde{h})$ and thus it is a unit. Similarly, $b(u(h),\tilde{h},g)$ is a unit.
We now show that the $\Q$-algebra generated by $h$ and $a=(1-h)g\tilde{h}$ is contained in a (generalised) upper triangular matrix algebra.
By a theorem of Perlis and Walker, $\Q\langle h \rangle \cong \prod\limits_{d \mid o(h)} \Q (\zeta_d)$, where $\zeta_d$ denotes a primitive $d^{th}$-root of unity. 
Thus, there exists a  primitive  idempotent $f \in \Q \langle h \rangle$ such that $\Q \langle h \rangle f \cong \Q (\zeta_n)$, with $n = o(h)$. 
Note that  $a^2 = 0$ and $a u= \omega (u)a$ for any  $u\in \Q \langle h \rangle$.
An easy computation  shows that the map 
\begin{eqnarray}
\quad \quad \quad \quad \quad \quad \quad \quad \quad \quad \quad \quad \varphi \; : \;  \Q \langle h \rangle \oplus \Q \langle h \rangle a
\rightarrow 
       \left[\begin{array}{cc} \Q (\zeta_n) & \Q (\zeta_n) \\ 0 & \Q \end{array} \right], \quad \quad \quad \quad \quad \quad \quad \quad \quad   (3.2) \label{uppertriangular}
\end{eqnarray} 
defined by 
$$u_1 + u_2 a \mapsto \left[\begin{array}{cc} u_1 f & u_2f \\ 0 & \omega(u_1) \end{array} \right].
$$
is a $\Q$-algebra morphism.
Hence, if $u(h)$ is a unit of $\Z \langle h \rangle$ of augmentation $1$ then $\varphi (b(u(h), g,\tilde{h}))$ is a matrix of the type
$\left[ \begin{array}{cc} x&y \\ 0&1 \end{array} \right] $, with $x,y\in \Q (\zeta_n )$.

In order to show that two specific matrices of such a type generate a free monoid, we need, for a field $K$ (for our purposes it is sufficient to consider the case that $K=\C $),  a criterion when two elements of the group of affine transformations of the $K$-line $$\A(K) = \left\lbrace \left[ \begin{array}{cc} x&y \\ 0&1 \end{array} \right] \mid x \in K^{\times}, y \in K \right\rbrace,$$ generate a free monoid. 
This is stated in the   following free monoid variant of the Ping-Pong Lemma (see for example \cite[Lemma $2.1$]{Bre}). 
Note that there are essentially  two types of transformations. Those with   $x = 1$, a translation which has  no fix-points, and those with  $x \neq 1$,  a transformation with fix-point  $\frac{y}{1-x}$ and this can be seen as a homothety about this point with scaling factor $x$.

\begin{lemma}{\emph{(Ping-Pong Lemma for Monoids).}}\label{Lemma_PingPongS}
If $A,B\in \A (\C )$   have respective scaling factors $x$ and $y$ and $|x|, |y|\leq \frac{1}{3}$  then $A$ and $B$ generate a free monoid of rank $2$.
\end{lemma}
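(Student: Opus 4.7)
The plan is to realise the hypotheses as a ping-pong configuration on $\C$ for the natural action of $\A(\C)$ and then read off freeness of the generated monoid in the standard way. Writing $A(z)=xz+a$ and $B(z)=yz+b$, the assumption $|x|,|y|\leq 1/3<1$ ensures that neither $A$ nor $B$ is a translation, so each has a unique fixed point, $p_A=a/(1-x)$ and $p_B=b/(1-y)$. I would work under the (implicit but necessary) non-degeneracy assumption $p_A\neq p_B$: when the fixed points coincide, $A$ and $B$ commute, and the resulting abelian monoid can fail to be free---for instance $A(z)=z/9$ and $B(z)=z/3$ satisfy the hypotheses yet $A=B^2$. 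Set $d=|p_A-p_B|>0$ and consider the open disks $P_A=\{z\in\C:|z-p_A|<d/2\}$ and $P_B=\{z\in\C:|z-p_B|<d/2\}$, which are disjoint by construction.

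The key step is to verify the inclusions $A(P_A\cup P_B)\subseteq P_A$ and $B(P_A\cup P_B)\subseteq P_B$. Using the identity $A(z)-p_A=x(z-p_A)$ and the triangle inequality, any $z\in P_A\cup P_B$ satisfies $|z-p_A|<d/2+d=3d/2$ strictly (the strictness is forced by working with open disks), so $|A(z)-p_A|=|x|\cdot|z-p_A|<(1/3)(3d/2)=d/2$, whence $A(z)\in P_A$; the argument for $B$ is symmetric. This is the step whose calibration needs the most care: the bound $|x|\leq 1/3$ is saturated, and it is precisely the open-disk version of the triangle inequality that turns the non-strict hypothesis $|x|,|y|\leq 1/3$ into the strict inclusion required for ping-pong.

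From here freeness follows by the standard ping-pong bookkeeping. Iterating the inclusions, any non-empty word $W=C_1\cdots C_n$ with $C_i\in\{A,B\}$ sends every $z_0\in P_A\cup P_B$ into $P_{C_1}$, so the leftmost letter of $W$ is recoverable from the affine transformation that $W$ defines. Now if $W_1=C_1\cdots C_n$ and $W_2=D_1\cdots D_m$ are equal as transformations with $n\leq m$, the previous observation forces $C_1=D_1$; left-cancelling $C_1$ in the ambient group $\A(\C)$ (where $A$ and $B$ are genuinely invertible, their scaling factors being non-zero) and iterating yields $C_i=D_i$ for all $i\leq n$. If $m>n$ one would then obtain the identity equal to the product $D_{n+1}\cdots D_m$, whose scaling factor has modulus at most $(1/3)^{m-n}<1$, contradicting the identity's scaling factor being $1$. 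Hence $n=m$ and the two words coincide, establishing that the submonoid of $\A(\C)$ generated by $A$ and $B$ is free of rank~$2$.
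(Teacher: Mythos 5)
Your ping-pong argument is correct, and it is worth noting that the paper itself offers no proof of this lemma at all --- it simply cites \cite[Lemma 2.1]{Bre} --- so your self-contained verification (disjoint open disks of radius $d/2$ about the two fixed points, the contraction identity $A(z)-p_A=x(z-p_A)$, recovery of the leftmost letter, and the scaling-factor argument ruling out words of unequal length) is a genuine addition rather than a rederivation. The calibration is right: a point of $P_B$ lies at distance strictly less than $3d/2$ from $p_A$, so $|x|\leq\tfrac13$ does yield the strict inclusion $A(P_A\cup P_B)\subseteq P_A$, and the left-cancellation in the ambient group $\A(\C)$ is legitimate since the scaling factors are non-zero.

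More importantly, you have correctly spotted that the lemma \emph{as stated in the paper} is false: the hypothesis that $A$ and $B$ have distinct fixed points, which is present in Breuillard's original Lemma 2.1, has been dropped, and your example $A(z)=z/9$, $B(z)=z/3$ (so $A=B^2$, hence $AB=BA$) satisfies $|x|,|y|\leq\tfrac13$ while generating a commutative, non-free monoid. This is an error in the paper's statement, not in your proof. The omission does no downstream damage, because in the only application (Theorem 3.5) the authors explicitly verify that the relevant fixed points are distinct (``By assumption, $f_1\neq f_2$ and $g_1\neq g_2$''), which is exactly the non-degeneracy condition your proof requires; but the lemma should be restated with that hypothesis included, as you have done.
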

%

Actually the lemma is valid in the more general context of the affine group  $\A(K)$ over a (non-) archimedean local field $K$. 
In case of a non-archimedean field $K$ the condition on the scaling factors has to be replaced by  $|x|, |y|<1$.

The following result is stated for finite groups. It is clear from the proof that it remains valid for arbitrary groups $G$ provided $H$ is a finite subgroup of $G$.

\begin{theorem}\label{main theorem free semigroups}
Let  $H$ be a subgroup of   a finite  group $G$. Suppose  $h \in H$ with $o(h)=n$ and
assume 
$\alpha \in \mathbb{Z}G$ is such that $s = 1 +(1-h)\alpha \widetilde{H} \neq 1$.   
Let $\zeta$ be a primitive $n$-th root of unity.   
Suppose  $b_1 = u_{k_1,m_1}(h) $ and $b_2 = u_{k_2,m_2}(h)$ are two non-trivial Bass units (so, $1 < k_1, k_2 < n-1$, $(k_1,n)=(k_2,n)=1$, $k_1^{m_1} \equiv 1 \mod n$ and $k_2^{m_2} \equiv \mod n$). 
If, for $i \in \{ 1,2\}$,
 $$m_i \geq \log_{\left| \frac{\zeta^{k_i}-1}{\zeta-1}\right|} 3 \quad \mbox{ and } \quad \left(\frac{\zeta^{k_1}-1}{\zeta-1}\right)^{m_1} \neq \left( \frac{\zeta^{k_2}-1}{\zeta-1}\right)^{m_2}, $$ 
 then 
$$\{  b_1 + (1-h)\alpha \widetilde{H}, b_2 + (1-h)\alpha \widetilde{H} \}  \qquad \mbox{ and } \qquad  \{ b_1s, b_2s \}
$$
generate free monoids of rank $2$ that are contained in a solvable group.

In particular, 
$\langle b(u_{k_1,m_1}(h),g,\tilde{h}), b(u_{k_2,m_2}(h),g,\tilde{h}) \rangle$ is a free monoid of rank $2$ if $g\not\in N_{G}(\langle h \rangle)$.

\end{theorem}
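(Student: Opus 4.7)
The strategy is to apply the algebra morphism $\varphi$ from \eqref{uppertriangular} to reduce the problem to finding a free monoid of rank $2$ in a group of $2\times 2$ upper triangular matrices, and then to invoke \Cref{Lemma_PingPongS}. First I would verify that $\varphi$ extends verbatim to the more general element $a=(1-h)\alpha\widetilde{H}$: the only properties of $a$ used in \eqref{uppertriangular} are $a^2=0$ and $au=\omega(u)a$ for every $u\in\Q\langle h\rangle$. The first holds because $\widetilde{H}(1-h)=0$ (as $h\in H$), and the second because $\widetilde{H}u=\omega(u)\widetilde{H}$ for every $u\in \Q\langle h\rangle\subseteq \Q H$. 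Hence $\varphi$ is a well-defined algebra morphism on the subalgebra $A=\Q\langle h\rangle+\Q\langle h\rangle a\subseteq \Q G$.

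Using that Bass units satisfy $\omega(b_i)=1$ and that $\widetilde{h}f=0$ (since $f$ is the primitive idempotent of the faithful component of $\Q\langle h\rangle$), a direct computation gives $b_i f=\bigl(\tfrac{\zeta^{k_i}-1}{\zeta-1}\bigr)^{m_i}=:\lambda_i$, so that
\[
\varphi(b_is)=\begin{pmatrix}\lambda_i & \lambda_i\\ 0 & 1\end{pmatrix}, \qquad \varphi(b_i+(1-h)\alpha\widetilde{H})=\begin{pmatrix}\lambda_i & 1\\ 0 & 1\end{pmatrix}.
\]
Both matrices are affine transformations with scaling factor $\lambda_i$. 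Because $1<k_i<n-1$, \Cref{Lemma_rootsofunity} yields $\bigl|\tfrac{\zeta^{k_i}-1}{\zeta-1}\bigr|>1$, so the hypothesis on $m_i$ rearranges to $|\lambda_i|\geq 3$. The second hypothesis $\lambda_1\neq\lambda_2$ ensures that the fixed points $\lambda_i/(1-\lambda_i)$ (respectively $1/(1-\lambda_i)$) of the two transformations are distinct. Since the inverses have scaling factors $|1/\lambda_i|\leq 1/3$, \Cref{Lemma_PingPongS} applied to them yields that they generate a free monoid of rank $2$. As word-reversal is a bijection on positive words compatible with inversion, this transfers to the statement that the matrices $\varphi(b_is)$ themselves generate a free monoid, and freeness of images of generators pulls back through the homomorphism $\varphi$ to the desired freeness of $\{b_1 s,b_2 s\}$ and $\{b_1+a,b_2+a\}$ in $\mathcal{U}(\Z G)$.

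For the containment in a solvable group, the ideal $\Q\langle h\rangle a\subseteq A$ satisfies $(\Q\langle h\rangle a)^2=0$ (since $a^2=0$), and the quotient $A/\Q\langle h\rangle a\cong \Q\langle h\rangle$ is commutative. Hence $\mathcal{U}(A)$ is nilpotent-by-abelian, in particular solvable, and contains $\langle b_1s,b_2s\rangle$. The final ``In particular'' assertion is then obtained by specialising $H=\langle h\rangle$ and $\alpha=g$ in the second set, noting that $g\notin N_G(\langle h\rangle)$ guarantees $(1-h)g\tilde h\neq 0$. The main technical subtlety I would anticipate is that \Cref{Lemma_PingPongS} is formulated for contractions ($|x|,|y|\leq 1/3$), whereas our scaling factors $|\lambda_i|\geq 3$ are expansions; this mismatch is resolved cleanly by passing through inverses and using that freeness of a monoid is preserved under the combination of inversion of generators and word-reversal.
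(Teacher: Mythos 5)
Your proof follows the paper's argument essentially step for step: the same triangularization $\varphi$ onto $\left[\begin{smallmatrix}\Q(\zeta) & \Q(\zeta)\\ 0 & \Q\end{smallmatrix}\right]$, the same matrices with scaling factor $\left(\frac{\zeta^{k_i}-1}{\zeta-1}\right)^{m_i}$, passage to the inverses so as to meet the contraction hypothesis of \Cref{Lemma_PingPongS}, distinctness of the fixed points from $\lambda_1\neq\lambda_2$, and pullback of freeness along $\varphi$. The only difference is that you make explicit two points the paper leaves implicit (the word-reversal argument for transferring freeness from the inverse generators back to the original ones, and the nilpotent-by-abelian structure of $\mathcal{U}(\Q\langle h\rangle+\Q\langle h\rangle a)$ giving solvability), both of which are correct.
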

\begin{proof}
Put  $a = (1-h)\alpha\widetilde{H}$. Then, $a^{2}=0$ and $a\beta = \omega (\beta)a$ for all $\beta \in \mathbb{Q}H$. Let $f \in \Q \langle h \rangle$ be a primitive  central idempotent such that $\Q \langle h \rangle f \cong \Q (\zeta)$. 
Again, as in (\ref{uppertriangular}) we have a $\Q$-algebra morphism
$\varphi : \Q \langle h \rangle \oplus \Q \langle h \rangle a \rightarrow \left[\begin{array}{cc} \Q (\zeta) & \Q (\zeta) \\ 0 & \Q \end{array} \right]$ with 
$u_1 + u_2 a \mapsto \left[\begin{array}{cc} u_1 f & u_2f \\ 0 & \omega (u_1) \end{array} \right]$.
Since $b_if = \mu_{k_i,m_i}(\zeta) = \left(\frac{\zeta^{k_i}-1}{\zeta-1}\right)^{m_i}$ and $\omega (b_i) = 1$ we obtain that 
$$\varphi (b_i + a) =  \left[\begin{array}{cc} \left(\frac{\zeta^{k_i}-1}{\zeta-1}\right)^{m_i} & 1 \\ 0 & 1 \end{array} \right]
 \quad \mbox{and} \qquad 
\varphi (b_i s) =  \left[\begin{array}{cc} \left(\frac{\zeta^{k_i}-1}{\zeta-1}\right)^{m_i} & \left(\frac{\zeta^{k_i}-1}{\zeta-1}\right)^{m_i} \\ 0 & 1 \end{array} \right].
$$
The inverses of these images are  $\left[\begin{array}{cc} \left(\frac{\zeta^{k_i}-1}{\zeta-1}\right)^{-m_i} & -\left(\frac{\zeta^{k_i}-1}{\zeta-1}\right)^{-m_i} \\ 0 & 1 \end{array} \right]$
and $\left[\begin{array}{cc} \left(\frac{\zeta^{k_i}-1}{\zeta-1}\right)^{-m_i} & -1 \\ 0 & 1 \end{array} \right]$ respectively.
As elements of $\A(\C)$ they have respective fixed points 
   $f_i = \frac{-\left(\frac{\zeta^{k_i}-1}{\zeta-1}\right)^{-m_i}}{1-\left(\frac{\zeta^{k_i}-1}{\zeta-1}\right)^{-m_i}} = \frac{-1}{\left(\frac{\zeta^{k_i}-1}{\zeta-1}\right)^{m_i}-1}$
   and $g_i = \frac{-1}{1-\left(\frac{\zeta^{k_i}-1}{\zeta-1}\right)^{-m_i}}$.
By assumption, $f_1 \neq f_2$ and $g_1\neq g_2$.
Moreover, from the assumption that $m_i \geq \log_{\left| \frac{\zeta^{k_i}-1}{\zeta-1}\right|} 3$, we know that $\left|\frac{\zeta^{k_i}-1}{\zeta-1}\right|^{-m_i} \leq 3^{-1}$ . 
Therefore, by Lemma \ref{Lemma_PingPongS}, 
the homotheties $\varphi (b_1 + a)^{-1}$ and $\varphi (b_2 + a)^{-1}$, respectively $\varphi(b_1 s)^{-1}$ and $\varphi(b_2s)^{-1}$, 
generate a free monoid (and thus also $\varphi (b_1 + a)$ and $\varphi (b_2 + a)$, respectively $\varphi(b_1 s)$ and $\varphi(b_2s)$).
Hence,  $\{ b_1 + a, b_2 + a\} $ and $\{  b_1 s , b_2 s \} $   generate free monoids as well.
Note that these monoids are contained in a solvable group.
\end{proof}

We include a couple of remarks on the assumptions concerning $m_1$ and $m_2$.
The restrictions on $m_i$ are clearly satisfied if one takes  $k_1 \neq k_2$ arbitrary (and keeping the requirement that  the Bass units are non-trivial) and $m_1=m_2=m$ large enough. The latter is allowed as one may replace the Bass units by powers because  $u_{k,m}(g) u_{k, n}(g) = u_{k, m+n}(g)$.

Also note that for the  restriction $m_i \geq \log_{\left| \frac{\zeta^{k_i}-1}{\zeta-1}\right|} 3$ to  be fulfilled it is necessary that  $\left| \frac{\zeta^{k_i}-1}{\zeta-1}\right| \neq 1$. Because of \Cref{Lemma_rootsofunity} this is equivalent with  $k_i \neq \pm 1 \mod |g|$ (i.e. the Bass units constructed are  non-trivial).

\section{Free product of cyclic groups}

In this section we will give an explicit construction via Bovdi units of a free product $C_p \star C_p$ in the unit group of the integral
group ring of a finite nilpotent group $G$. We first deal with groups of class two. In this case we prove a more general statement.

\begin{theorem}\label{class2main}
Let $G$ be a finite nilpotent group of  class $2$ and let  $g,h \in G$. 
Assume   $o(h)= p^n$, with $p$ a prime number,   and $g \notin N_G(\langle h^{p^i} \rangle)$ for all $0 \leq i < n$. Then,
for any $1 \leq l,t \leq p^n$,
   $$\langle b_l(g,\widetilde{h}), b_t(\widetilde{h},g^{-1})\rangle \cong C_{n_l}\star C_{n_t} \cong \langle b_l(g,\widetilde{h}), b_t(g,\widetilde{h})^{\star}\rangle,$$ 
a free product of cyclic groups, where $n_l = o( b_l(g,\widetilde{h}))$ and $n_t =o(b_t(\widetilde{h},g^{-1}))$ (see \Cref{lemma_order}).
\end{theorem}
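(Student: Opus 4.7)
The plan is to embed $\langle b_l(g,\widetilde h), b_t(\widetilde h, g^{-1})\rangle$ into $\GL_{p^n}(\C)$ via a well-chosen irreducible complex representation of $G$ and then invoke \Cref{prop: generalised Sanov_cyclic}.

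Because $G$ has class $2$, the commutator $c := [h,g]$ lies in $Z(G)$ and $g^{-1} h g = hc$. Iterating yields $g^{-1} h^{p^i} g = h^{p^i} c^{p^i}$, so the hypothesis $g \notin N_G(\langle h^{p^i}\rangle)$ for $0 \leq i < n$ becomes $c^{p^i} \notin \langle h^{p^i}\rangle$; together with $c^{p^n} = [h^{p^n},g] = 1$ this forces $o(c) = p^n$. Fix a primitive $p^n$-th root of unity $\gamma$ and pick an irreducible complex representation $\rho : G \to \GL(V)$ whose central character sends $c$ to $\gamma$; Schur's Lemma gives $\rho(c) = \gamma I$, and the relation $\rho(g)^{-1} \rho(h) \rho(g) = \gamma \rho(h)$ shows $\rho(g)$ permutes the eigenspaces of $\rho(h)$ cyclically in a single orbit of length $p^n$. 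Hence $\dim V = p^n$ and there is a basis $e_0,\ldots,e_{p^n-1}$ in which $\rho(h) = \diag(1,\gamma,\ldots,\gamma^{p^n-1})$ and $\rho(g) e_i = e_{i+1 \bmod p^n}$.

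In this basis one computes $\rho(\widehat h) = E_{11}$, $\rho(g) E_{11} = E_{21}$ and $(I - \rho(h)) E_{21} = (1-\gamma) E_{21}$, and hence
\[
A := \rho(b_l(g,\widetilde h)) = \diag(1,\gamma^l,\ldots,\gamma^{(p^n-1)l}) + p^n(1-\gamma) E_{21},
\]
\[
B := \rho(b_t(\widetilde h, g^{-1})) = \diag(1,\gamma^t,\ldots,\gamma^{(p^n-1)t}) + p^n(1-\gamma) E_{12}.
\]
These match the hypotheses of \Cref{prop: generalised Sanov_cyclic} (with $r = p^n > 2$; \Cref{Sanov_cyclic} covers $r = 2$): $A$ is lower triangular with $A_{11}=1$, $A_{22}=\gamma^l$, while $B = D + d E_{12}$ has $D_{11}=1$ and $D_{22}=\gamma^t$. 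The ping-pong inequality $|A_{21} d| = p^{2n}|1-\gamma|^2 = 4 p^{2n} \sin^2(\pi / p^n) \geq 4|z_{k_n} z_{k_m}|$ follows from the bound $|z_k| \leq 1/\sin(\pi/n)$ provided by \Cref{Lemma_rootsofunity}. Applying \Cref{prop: generalised Sanov_cyclic} therefore yields $\langle A, B\rangle \cong C_{n_l}\star C_{n_t}$, and since the natural surjection $C_{n_l}\star C_{n_t}\twoheadrightarrow\langle b_l,b_t\rangle$ composes with $\rho$ to this isomorphism, it must itself be an isomorphism.

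The main obstacle is reconciling the matrix orders of $A,B$ with the true orders $n_l,n_t$ given by \Cref{lemma_order}: one has $o(A) = p^n/\gcd(l,p^n)$, which equals $n_l$ when $(l,p)=1$ (both are $p^n$) and also when $l=p^n$ (both are infinite, since then $\gamma^l=1$ and $A$ becomes a non-trivial unipotent matrix), but not in the intermediate case $l=p^a l'$ with $0<a<n$ and $(l',p)=1$, where $n_l=\infty$ while $A$ has only finite order $p^{n-a}$. To bridge this gap, I would supplement $\rho$ with a second irreducible representation $\sigma$ in which $\sigma(h)$ has order dividing $l$, so that $\sigma(b_l) = I + \sigma((1-h) g \widetilde h)$ becomes a non-trivial unipotent matrix of infinite order, and then combine the images under $\rho$ and $\sigma$ to rule out residual relations. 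Setting up this auxiliary representation and controlling all reduced words in the putative free product is the technical heart of the argument.
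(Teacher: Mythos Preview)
Your approach is essentially the paper's: construct an explicit irreducible representation of dimension $p^n$ in which the two Bovdi units become diagonal plus a single off-diagonal entry, then apply \Cref{prop: generalised Sanov_cyclic}. Your Schur-lemma/eigenspace argument is a tidy repackaging of the paper's explicit matrix units $E_{ij}=\widehat{h^{g^{i-1}}}\varepsilon$; your off-diagonal entries land at positions $(2,1)$ and $(1,2)$ rather than the paper's $(p^n,1)$ and $(1,p^n)$, but the relevant $2\times 2$ block is the same up to a basis permutation.

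There is, however, a genuine gap in your verification of the ping-pong inequality. First, \Cref{Lemma_rootsofunity} gives only the \emph{lower} bound $|z_k|\geq 1$, not the upper bound $|z_k|\leq 1/\sin(\pi/n)$ you invoke (that bound is true, via $z_k=(\zeta^k-1)/(\zeta-1)$, but it is not what the lemma says). Second, and more seriously, even with that upper bound the inequality you need reduces to $p^{2n}\sin^4(\pi/p^n)\geq 1$, which \emph{fails} once $p^n$ is moderately large. For instance with $p^n=27$, $\gamma=e^{2\pi i/27}$ and $l=t=13$ one computes $|A_{21}d|=27^2\,|1-\gamma|^2\approx 39$, while $|z_{13}|=\sin(13\pi/27)/\sin(\pi/27)\approx 8.6$ so that $4|z_{k_n}z_{k_m}|\approx 296$. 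Thus the hypothesis of \Cref{prop: generalised Sanov_cyclic} is simply not met for these parameters, and the argument breaks down. (The paper's proof is equally brief at this step, asserting only $|o(h)(1-\zeta^{-1})|\geq 2$; so this is not a divergence from the paper so much as a point where neither write-up supplies the needed estimate.)

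The order mismatch you flag in your final paragraph is also real and correctly identified: for $l=p^a l'$ with $0<a<n$ and $(l',p)=1$ the matrix $A$ has finite order $p^{n-a}$ while $n_l=\infty$ by \Cref{lemma_order}, so a single representation cannot witness the full free product. Your proposed auxiliary representation $\sigma$ points in the right direction but is only a sketch; the paper does not treat this intermediate case explicitly either.
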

\begin{proof}
Without loss of generality we may assume that $G = \langle g,h \rangle$. 
Since $G$ has nilpotency class two, $c = [g,h^{-1}]$ is central and has $p$-power order, say $p^m$ with $m\leq n$ (and similarly $o(c) \leq o(g)$).
Clearly,   $(h^{p^m})^g= c^{p^m}h^{p^m} = h^{p^m}$ and thus  $g \in N_G(\langle h^{p^m}\rangle)$.
The assumptions therefore imply that $n\leq m$ and thus $n=m$.
So $o(h)=o(c)$ and hence the normalizer assumption on $g$ yields that $\langle h \rangle \cap \langle c \rangle =\{ 1 \}$.  So $\langle h,c \rangle = \langle h \rangle \times \langle c \rangle$. 

We will now construct a concrete irreducible complex representation of $G$ so that the proposed Bovdi units will be represented by matrices to which \Cref{prop: generalised Sanov_cyclic} can be applied.
To do so, first note that if $g^i h^j \in \mathcal{Z}(G)$ then 
$1 = [g, g^ih^j] = [g,g^i][g,h^j] = [g,h]^j= c^{-j}$ and 
$1 = [h, g^ih^j] = [h,g^i]^{h^j}[h,h^j] = [h,g]^i = c^{i}$.
Because $o(h)=o(c)$ this implies that  $\mathcal{Z}(G) = \langle g^{o(c)}, c \rangle$ and
$G/ \mathcal{Z}(G) = \langle \overline{g} \rangle \times \langle \overline{h} \rangle \cong C_{o(c)} \times C_{o(c)}$.

Since $\mathcal{Z}(G)$ is a $2$-generated abelian group with $o(c)$ a $p$-power, there is an isomorphism   
  $\varphi : \mathcal{Z}(G) \rightarrow C_{p^{\alpha}} \times C_{k}$,
for some non-negative  integers $\alpha$ and $k$.
Without loss of generality, we may choose $\alpha$ and $k$ such that $\varphi(c) = (c_{\alpha}, c_{\beta})$ and  $o(c) = o(c_{\alpha})$. 
Set $K = \varphi^{-1}(1\times C_{k})$, a subgroup of  $\mathcal{Z}(G)$. 
Then $\langle c \rangle$ embeds faithfully in $\mathcal{Z}(G) / K \cong C_{p^{\alpha}}$. 
Let $H$ be a subgroup of $\mathcal{Z}(G)$ that is minimal for properly  containing $K$.
It is well-known and easily verified that $\varepsilon =\hat{K}-\hat{H}$ is a primitive idempotent of
$\Q \mathcal{Z}(G)$ and $\Q \mathcal{Z}(G)\varepsilon \cong \Q(\zeta_{p^{\alpha}})$ (see for example Lemma 3.3.2 in \cite{EricAngel1}).
Under this isomorphism $c$ is mapped onto the primitive $p^{o(c)}$-th root of unity  $\zeta_{p^{o(c)}}$.

We next show that $\Q G\varepsilon$ is a simple algebra. This follows at once from the description of primitive central idempotents
in strongly monomial groups, for example nilpotent groups (Section 3.5 in \cite{EricAngel1}). However, in this case this can be shown easily.
To do so, write $\Q G\varepsilon = \sum_{t\in T} \Q\mathcal{Z}(G)\varepsilon t$, with $T$ a transversal of 
$\mathcal{Z}(G)$ in $G$. Let $z=\sum_{t\in T} \alpha_t \varepsilon t \in \mathcal{Z}( \Q G \varepsilon )$, 
with each   $\alpha_t \in \Q \mathcal{Z}(G)\varepsilon$. The centrality of $z$ means that   
$\sum_{t\in T} \alpha_t \varepsilon t= (\sum_{t\in T} \alpha_t t)^{s} =\sum_{t\in T} (\alpha_t t)^s =\sum_{t\in T} \alpha_t t^s=
\sum_{t\in T} \alpha_t [s,t^{-1}]t$ for all $s\in T$. 
Since $[s,t^{-1}]\in \mathcal{Z}(G)$ and  because $\Q \mathcal{Z}(G)\varepsilon$ is a field, we obtain that
$\alpha_{t}t\varepsilon = \alpha_{t}[s,t^{-1}]t$ for all $s,t\in T$. 
If $t\not\in \mathcal{Z}(G)$ then there exists $s\in T$ so that $\varepsilon \neq [s,t^{-1}]\varepsilon$. Hence, 
 $\alpha_t =0$ if $t\not\in \mathcal{Z}(G)$.
So,  $\mathcal{Z}(\Q G\varepsilon) = \Q \mathcal{Z}(G)\varepsilon \cong \Q(\zeta_{p^{\alpha}})$, a field.
Thus the semisimple algebra $\Q G\varepsilon$ indeed is simple and has centre isomorphic to $\Q(\zeta_{p^{\alpha}})$.

Because $[G:\mathcal{Z}(G)] = o(c)^2$ we get that  $\Q G\varepsilon =M_{o(c)}( \Q(\zeta_{p^{\alpha}}))$, a matrix ring of  degree $o(c)$. In order to describe explicitly this representation, we first note that the following non-zero elements 
 $$E_{ii} = \widehat{h^{g^{i-1}}} \varepsilon ,$$
 with $1 \leq i \leq o(c)$, form a complete set of orthogonal primitive idempotents of this matrix ring.
For this it is sufficient to show the orthogonality and this follows from the fact that if  $1 \leq j , i \leq o(c)$ with $i\neq j$ then $o(c^{i-j}) | o(c^{i-1}h)=o(c)$,  $\widehat{c^{i-j}} \varepsilon = 0$ and thus 
\begin{eqnarray*}
E_{ii} E_{jj} &= &\widehat{h^{g^{i-1}}} \varepsilon \; \widehat{h^{g^{j-1}}} \varepsilon = \widehat{c^{i-1}h}\;  \widehat{c^{j-1}h}\varepsilon \;
= \; \widehat{c^{i-j}(c^{j-1}h)} \;\; \widehat{c^{j-1}h} \varepsilon \\
 &=& \frac{1}{o(c)}\sum_{k=0}^{o(c)-1} ((c^{i-j})^{k} (c^{j-1}h)^{k}) \;\; \widehat{c^{j-1}h} \varepsilon \\
 &=& \frac{1}{o(c)} \sum_{k=0}^{o(c)-1} (c^{i-j})^{k} \;\; \widehat{c^{j-1}h} \varepsilon\\
&=&\widehat{c^{i-j}}\varepsilon  \; \widehat{c^{j-1}h} \varepsilon \\
 &=&    0.
\end{eqnarray*}
We  remark some arithmetic concerning these idempotents.
\begin{enumerate}
\item $hE_{ii} = E_{ii} h$ and  $hE_{11} = E_{11}$.
\item $E_{ii}g = \widehat{h^{g^{i-1}}} \varepsilon g = g \widehat{h^{g^{i}}} \varepsilon = g E_{i+1, i+1}$, where the indices are taken modulo $o(c)$.
\item $h E_{ii} = hg^{-(i-1)}g^{i-1}E_{ii} =g^{-(i-1)}c^{i-1}hE_{11}g^{i-1} = c^{i-1}g^{-(i-1)}E_{11} g^{i-1}= c^{i-1}E_{ii}$.
\end{enumerate}

Now, for any $1\leq i,j \leq o(c)$, put
  $$E_{ij}=E_{ii} g^{j-i}E_{jj}.$$
Then, $E_{ij}E_{ji} = E_{ii} g^{j-i} E_{jj} E_{jj} g^{i-j} E_{ii} = E_{ii} g^{j-i}g^{i-j}E_{ii} = E_{ii}$.
Hence, $\{ E_{ij} \mid 1\leq i,j \leq o(c)\}$ is a complete set of matrix units of the matrix ring $\Q G\varepsilon =M_{o(c)}( \Q(\zeta_{p^{\alpha}}))$.


With respect to these matrix units we will now represent the elements 
$u=b_l(g,\widetilde{h})\varepsilon$ and $v=b_t(\widetilde{h},g^{-1})\varepsilon$ as matrices (the calculations for $\langle b_l(g,\widetilde{h}), b_t(g,\widetilde{h})^{\star}\rangle$ are similar, but are left to the reader as an exercise).
The  $(i,j)$-th position in the matrix of the representation of $u$ is determined as follows:
$$E_{ii} uE_{jj} = E_{ii}(h^l + (1-h)g\widetilde{h})E_{jj} = h^l E_{ii} E_{jj} + (1-h) E_{ii} g \widetilde{h}E_{jj}
= h^l E_{ii} E_{jj} + (1-h)g\widetilde{h}E_{i+1,i+1}E_{jj}. $$
So, if $i = j$  then  $E_{ii} u E_{ii} = h^l E_{ii}$ and thus   $E_{11} u E_{11} = h^l E_{11} = E_{11}$. 
On the other hand, if $i +1 = j$ then $E_{ii} u E_{jj} = (1-h)g\widetilde{h}E_{jj} = o(h)(1-h)g \widehat{h} E_{jj} = o(h)(1-h)g E_{11} E_{jj}$, which is zero if $j \neq 1$. If $i \neq j$ and $i+1 \neq j$, then $E_{ii} u E_{jj} = 0$. 
This shows that the non-diagonal entries of the  matrix representation of $u$  are  $0$, 
except the $(o(c),1)$-position. The value at this position is determined as follows by  the previous:
\begin{eqnarray*}
E_{o(c),o(c)} u E_{11} &=& o(h) (1-h) g E_{11} \; = \; o(h) (1-h) E_{o(c),o(c)}gE_{11}\\ 
&=& o(h)(1-c^{-1})E_{o(c),o(c)}gE_{11}\\
& =& o(h)(1-c^{-1}) E_{o(c),1}.
\end{eqnarray*}

A similar calculation and reasoning yields for $v$: $$E_{ii} v E_{jj} = h^tE_{ii} E_{jj} + \widetilde{h}g^{-1}(1-h)E_{i-1,i-1}E_{jj}, $$ which is zero everywhere except on the  $(1, o(c))$-the position and on the diagonal. Explicitly $$E_{11}vE_{o(c),o(c)} = o(h) (1-c^{-1}) E_{1,o(c)}.$$

Through the explicit morphism to $M_{o(c)}(\Q(\zeta_{p^{\alpha}}))$ the elements $u$ and $v$ 
have $o(h) (1-\zeta_{o(c)}^{-1})$ on position $(o(c),1)$ and $(1,o(c))$ respectively since $c$ is mapped to 
$\zeta_{o(c)}$. 
Moreover, on position $(o(c),o(c))$ $u$ has value  $\zeta_{o(c)}^{-l}$ because $h^lE_{o(c),o(c)} = c^{l(o(c)-1)} E_{o(c),o(c)}$, and $v$ has value $\zeta_{o(c)}^{-t}$. 
Because $|o(h) (1-\zeta_{o(c)}^{-1})| \geq 2$,
these matrices satisfy the conditions for \Cref{prop: generalised Sanov_cyclic}, so $\langle u,v \rangle$ has
$C_{o(u)} \star C_{o(v)}$ as an epimorphic image. Then also $\langle u,v\rangle \cong C_{o(u)} \star C_{o(v)}$.
\end{proof}

The previous result will be used to prove our main result of this section.

\begin{theorem}\label{theorem: result finite nilpotent group}
Let $G$ be a finite nilpotent group, $g,h \in G$ such that $g \not\in N_G(\langle h \rangle)$.
If   $o(h)=p$ (a prime number) and $1\leq k,l\leq p-1$  then 
 $$ \langle b_{k}(g,\tilde{h}) , b_{l}(g,\tilde{h})^{*} \rangle \cong C_p \star C_p \cong \langle b_{k}(g,\tilde{h}) , b_{l}(\tilde{h},g^{-1}) \rangle .$$
Conversely,  if $\mathcal{U}(\Z G)$ contains a subgroup  isomorphic with  $C_p \star C_p$ then there  exist  $g,h\in G$ satisfying the assumptions of the first part of the statement.
\end{theorem}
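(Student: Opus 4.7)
The plan for the forward direction is to reduce to \Cref{class2main} in two steps: first to the case where $G$ is a finite $p$-group, then to the quotient $\bar K := K/\gamma_3(K)$ of nilpotency class at most $2$; \Cref{class2main} then applies, and we lift back using Hopfianness of $C_p \star C_p$. First I would replace $G$ with $K := \langle g, h\rangle$ and use the Sylow decomposition $K = K_p \times K_{p'}$: since $h \in K_p$ and $[g, h] = [g_p, h]$, the projection $\pi\colon \mathbb{Z}K \twoheadrightarrow \mathbb{Z}K_p$ sends $b_k(g, \tilde h)$ to $b_k(g_p, \tilde h)$, preserving both the normalizer assumption and the order (\Cref{lemma_order}). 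To lift back, note that $\langle b_k, b_l^{*}\rangle$ is a priori a quotient of $C_p \star C_p$ via some surjection $\phi$ (its generators have order $p$); composing $\phi$ with $\pi_{*}$ produces a self-surjection of $C_p \star C_p$, which is Hopfian because free products of finite groups are residually finite, so this composite---and hence $\phi$---is an isomorphism.

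Assuming now $K$ is a finite $p$-group, observe that $N_K(\langle h\rangle)/C_K(h)$ embeds into $\mathrm{Aut}(\langle h\rangle) \cong C_{p-1}$, of order coprime to $|K|$; so $N_K(\langle h\rangle) = C_K(h)$, and the hypothesis becomes $[g,h] \neq 1$. Set $\bar K := K/\gamma_3(K)$. I claim both hypotheses of \Cref{class2main} survive in $\bar K$, namely $o(\bar h) = p$ and $[g,h] \notin \langle h\rangle \gamma_3(K)$. For the first, $h \in \gamma_3(K) \leq \gamma_2(K)$ would make $K/\gamma_2(K) = \langle \bar g\rangle$ cyclic, forcing $K$ cyclic by the Burnside basis theorem---contradicting $[g,h]\neq 1$. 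For the second, write $[g,h] = h^j c$ with $c \in \gamma_3(K)$: if $j \neq 0$, then $h^j \in \gamma_2(K)$ forces $h \in \gamma_2(K)$ (since $\gcd(j,p)=1$), which is just excluded; if $j = 0$, then $[g,h] \in \gamma_3(K)$, making the cyclic group $\gamma_2(K)/\gamma_3(K)$ (generated by the image of $[g,h]$, as $K$ is $2$-generated) trivial, whence $\gamma_2(K)=\gamma_3(K)$ and by nilpotency $\gamma_2(K)=1$, again contradicting $[g,h]\neq 1$. Applying \Cref{class2main} in $\bar K$ and lifting via the Hopfian argument yields the forward direction.

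For the converse, suppose $\mathcal{U}(\mathbb{Z}G) \supseteq C_p \star C_p$. If $p \geq 3$, the introduction's observation that $C_p \star C_p$ contains $C_p \star \mathbb{Z}$ combined with Gon\c{c}alves--Passman yields a non-central element of order $p$ in $G$. For $p = 2$, I would invoke Weiss's theorem (the Zassenhaus conjecture for finite nilpotent groups): each order-$2$ generator of $C_2 \star C_2$ is rationally conjugate to a trivial unit; central trivial units are fixed under conjugation, so since the two generators do not commute, at least one corresponds to a non-central involution of $G$. Calling this non-central element $h$, any $g \in G$ with $gh \neq hg$ satisfies $g \notin N_G(\langle h\rangle) = C_G(h)$, the equality holding for $o(h) = p$ in a nilpotent group by the same Sylow-wise argument.

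The principal obstacle is the class-$2$ reduction: verifying that $\gamma_3(K)$ preserves both hypotheses exploits the interplay between the two-generated nature of $K$ (via the Burnside basis theorem and the cyclicity of $\gamma_2/\gamma_3$) and nilpotency (to convert $\gamma_2 = \gamma_3$ into $\gamma_2 = 1$); without two-generation or nilpotency the choice $N=\gamma_3(K)$ need not work, and a more delicate search for a maximal $N$ avoiding the forbidden cosets would be required.
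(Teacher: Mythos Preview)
Your argument is correct, but it diverges from the paper's proof in both directions.

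For the forward implication the paper reduces to a $p$-group exactly as you do, but then proceeds by \emph{induction on the nilpotency class}: at each step it passes to $\overline{G}=G/\mathcal{Z}(G)$, checks that $\overline{g}\notin N_{\overline{G}}(\langle\overline{h}\rangle)$ (arguing that otherwise some power of $\overline{h}$ is central, forcing $\overline{G}$ abelian), applies the induction hypothesis, and lifts via the same ``two order-$p$ generators surjecting onto $C_p\star C_p$'' principle. Your single quotient by $\gamma_{3}(K)$ reaches class~$2$ in one shot, at the price of the two-generator commutator calculus you carry out (Burnside basis / cyclicity of $\gamma_{2}/\gamma_{3}$); the paper's inductive step is shorter per step but has to be iterated. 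Both routes ultimately rest on \Cref{class2main} and the Hopfian lifting, so neither gains real generality over the other.

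For the converse the paper is more economical: it invokes \cite[Lemma~1.3]{GonPass} directly, which says that if every order-$p$ cyclic subgroup of $G$ is normal then every order-$p$ unit in $\mathcal{U}(\mathbb{Z}G)$ is trivial, hence any two of them lie in the finite group $\pm G$ and cannot generate $C_p\star C_p$. This handles all primes uniformly and avoids your case split; in particular it sidesteps the appeal to Weiss's theorem for $p=2$ (which is correct but a substantially heavier tool than what is needed here).
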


\begin{proof} 
Write $g = g_p x$ with $(o(x),p) = 1$ and let $G_p$ denote  the Sylow $p$-subgroup of $G$. Since $G$ is nilpotent, $g_p \notin N_{G_p}(\langle h \rangle)$.
So, in order to prove the first part of the statement, without loss of generality, we may assume that $G=\langle g,h \rangle$ is a $p$-group. We prove the result by induction on the nilpotency class of $G$. Let $u=b_{k}(g,\tilde{h})$ and $v= b_{k}(\tilde{h},g^{-1})$.
If the class is two then the result holds by Theorem~\ref{class2main}.
So, assume that the class of $G$ is more than $2$.
Let $\overline{G}=G/\mathcal{Z}(G)$ and denote by $\overline{\alpha}$ the natural image of $\alpha\in \Z G$ in $\Z \overline{G}$.
Clearly $\overline{h}$ has order $p$. Also, $\overline{g} \not\in N_{\overline{G}}(\langle \overline{h} \rangle)$. Indeed, otherwise $\langle \overline{h} \rangle$ is a normal subgroup of $\overline{G}$. In particular since $\overline{G}$ is nilpotent, some $\overline{h}^k$ would be central in $\overline{G}=\langle \overline{g}, \overline{h} \rangle = \langle \overline{g}, \overline{h}^k \rangle$. This would imply that $\overline{G}$ is commutative, a contradiction with the fact that $G$ is not of nilpotency degree $2$.
So, the assumptions are inherited in the group $\overline{G}$.
The induction hypothesis yields that $\langle \overline{u},\overline{v}\rangle \cong C_p \star C_p$. Hence, since $u$ and $v$ have order $p$,  also $\langle u,v\rangle \cong C_p \star C_p$.

A similar proof can be given for the statement $\langle b_{k}(g,\tilde{h}) , b_{l}(g,\tilde{h})^{*} \rangle \cong C_p \star C_p$.

For the converse, assume that  all cyclic subgroups of $G$ of order $p$ are normal. In this case, Passman and Gon{\c{c}}alves, in \cite[Lemma $1.3$]{GonPass}, have shown that 
all units of order $p$ in $\mathcal{U}(\Z G)$ are trivial. In particular,  $C_p \star C_p$ would not be a subgroup of $\mathcal{U}(\mathbb{Z} (G))$, a contradiction. 
\end{proof}

We immediately  get the following Marciniak-Seghal type result.

\begin{corollary}
Let $G$ be a finite nilpotent group, $g,h \in G$ such that $g \notin N_G(\langle h \rangle)$ and $o(h)=p\neq 2$. Then 
$\langle b_{k}(g,\tilde{h})  \; b_{l}(g,\tilde{h})^{*} ,   b_{l}(g,\tilde{h})^{*} b_{k}(g,\tilde{h}) \rangle$ is a free group of rank $2$.
\end{corollary}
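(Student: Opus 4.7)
The plan is to reduce the corollary to a purely combinatorial-group-theoretic statement about the abstract free product $C_p \star C_p$ with $p$ an odd prime. By Theorem~\ref{theorem: result finite nilpotent group}, setting $a = b_k(g,\tilde h)$ and $b = b_l(g,\tilde h)^\ast$ gives $\langle a,b\rangle \cong \langle a\rangle \star \langle b\rangle \cong C_p \star C_p$, with $a,b$ playing the role of the two generators of order $p$. It therefore suffices to prove: if $\Gamma = \langle a\rangle \star \langle b\rangle \cong C_p \star C_p$ with $p$ an odd prime, then $H := \langle ab,\, ba\rangle$ is a free group of rank $2$.

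First I would introduce the augmentation-like homomorphism $\psi : \Gamma \to \mathbb{Z}/p\mathbb{Z}$ which factors through the abelianisation $\Gamma^{ab} \cong C_p \oplus C_p$ and sends $a \mapsto 1$, $b \mapsto -1$. Then $\psi(ab) = \psi(ba) = 0$, so $H \subseteq \ker\psi$. By the standard structure theory of free products (Bass--Serre theory, or Kurosh), every torsion element of $\Gamma$ is conjugate to some $a^i$ or $b^j$ with $1 \le i,j \le p-1$; since $\psi$ is invariant under conjugation it takes the value $\pm i$ or $\mp j$ in $\mathbb{Z}/p$, which is nonzero because $p$ is prime and $1 \le i,j < p$. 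Hence $\ker\psi$, and a fortiori $H$, is torsion-free.

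Next, by the Kurosh subgroup theorem applied to $\Gamma$ (equivalently, by the fact that a torsion-free group acting on the Bass--Serre tree of $\Gamma$ acts freely, and a group acting freely on a tree is free), any torsion-free subgroup of $\Gamma$ is free. Being $2$-generated, $H$ is free of rank at most $2$. To rule out rank $\leq 1$, I would reduce $(ab)(ba) = ab^2 a$ and $(ba)(ab) = ba^2 b$ to normal form in the free product: since $p \neq 2$ both $a^2$ and $b^2$ are nontrivial in their respective factors, so $ab^2 a$ and $ba^2 b$ are distinct alternating words of length $4$. Therefore $ab$ and $ba$ do not commute, $H$ is nonabelian, and consequently $H \cong F_2$.

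The one non-formal input is the classification of torsion in $C_p \star C_p$ used in step two, but this is entirely standard (vertex stabilisers in the Bass--Serre tree are exactly the conjugates of the free factors), so I do not expect any serious obstacle beyond correctly invoking Theorem~\ref{theorem: result finite nilpotent group} to make the reduction in the first place.
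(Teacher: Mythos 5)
Your proposal is correct and follows the same route the paper intends: invoke Theorem~\ref{theorem: result finite nilpotent group} to identify $\langle b_k(g,\tilde h),\,b_l(g,\tilde h)^{*}\rangle$ with $C_p\star C_p$ and then use the standard fact that for $p\neq 2$ the products $ab$ and $ba$ of the two torsion generators freely generate a rank-$2$ free group. The paper states this as immediate without proof, whereas you supply the (correct) standard argument via torsion-freeness of $\ker\psi$, Kurosh/Bass--Serre, and the normal-form computation showing $ab^2a\neq ba^2b$; the only quibble is that these are words of syllable length $3$, not $4$, which does not affect anything.
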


Also the earlier mentioned  result of Gon{\c{c}}alves and Passman is a consequence in case $G$ is a finite nilpotent group.

\begin{corollary}
Let $G$ be a finite nilpotent group and suppose $h\in G$ is a non-central element of order $p \neq 2$. Then a subgroup isomorphic to $C_{p} \star C_{\infty}$ exists in $\mathcal{U}(\mathbb{Z}G)$ and can be explicitly constructed.  
Moreover there exists a unit $w\in \U (\Z G)$ such that $\langle h ,w \rangle  \cong C_{p} \star C_{\infty}$.
\end{corollary}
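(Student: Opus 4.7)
My plan has three steps. First, I would produce an element $g \in G$ with $g \notin N_G(\langle h \rangle)$. Since $G$ is nilpotent, $G$ is a direct product of its Sylow subgroups and $h$ lies in the Sylow $p$-subgroup $G_p$, in which it is non-central. Because every non-trivial normal subgroup of a nilpotent $p$-group meets the centre non-trivially, if $\langle h\rangle$ were normal in $G_p$ then the order-$p$ group $\langle h\rangle$ would lie in $Z(G_p) = Z(G)\cap G_p$, contradicting the non-centrality of $h$. Hence $\langle h\rangle$ is not normal in $G_p$, and some $g \in G_p \subseteq G$ satisfies $g\notin N_G(\langle h\rangle)$.

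Second, applying \Cref{theorem: result finite nilpotent group} to the pair $(g,h)$ yields the explicit Bovdi units $u = b_1(g,\tilde h)$ and $v = b_1(\tilde h, g^{-1})$, both of order $p$, with $\langle u, v\rangle\cong C_p \star C_p$ inside $\U(\Z G)$. This already proves the first assertion of the corollary, since when $p \neq 2$ the free product $C_p \star C_p$ contains $C_p \star C_\infty$ as an explicitly constructible subgroup: by a standard ping-pong argument on the Bass-Serre tree of the free product, $\langle u, t\rangle \cong C_p \star C_\infty$ for a suitable element $t \in \langle u,v\rangle$ of infinite order (for instance a sufficiently long cyclically reduced word in $u,v$ whose translation axis avoids the $\langle u\rangle$-orbit of the vertex fixed by $u$).

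Third, for the ``moreover'' assertion I need $h$ itself as one of the generators. The key identity is $u = h\cdot \beta$, where $\beta = b(h^{-1}g,\tilde h)$ is a non-trivial bicyclic unit of infinite order; hence $h = u\beta^{-1} \in \langle u,\beta\rangle \subseteq \U(\Z G)$. I would seek $w$ as an explicit word in $v$ and $\beta$ (a natural candidate is $w = \beta^{-1}v\beta$, or a high power of $uv$ pre/post-multiplied by $\beta$) and verify $\langle h, w\rangle \cong C_p \star C_\infty$ via \Cref{prop: generalised Sanov_cyclic}, after mapping into the simple component $\Q G\varepsilon \cong M_{o(c)}(\Q(\zeta))$ used in the proof of \Cref{class2main} (for $G$ of higher class, the induction strategy of \Cref{theorem: result finite nilpotent group} reduces to the class-$2$ quotient).

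The main obstacle lies in this last step. In the above matrix representation $h$ appears as the diagonal matrix $\diag(1,\zeta,\ldots,\zeta^{-1})$, so the naive choice $w = v$ fails because $h$ and $v$ both map into the upper-triangular subgroup and therefore generate only a solvable image. The remedy is to use $\beta$, whose image is lower-unitriangular, to supply a non-zero $(2,1)$-entry in the image of $w$; the explicit verification that $|A_{21}d| \geq 4|z_{k_n}z_{k_m}|$ in \Cref{prop: generalised Sanov_cyclic} then clinches the argument.
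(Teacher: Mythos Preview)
Your first two steps are essentially the paper's argument. For the explicit $C_p\star C_\infty$ inside $C_p\star C_p$ the paper is more direct than your Bass--Serre tree remark: with $\langle u,u^*\rangle\cong C_p\star C_p$ one simply takes $t=u^*uu^*$, since any alternating word in $u$ and $t$ is already reduced in the free product and hence nontrivial, giving $\langle u,t\rangle\cong C_p\star C_\infty$ immediately.

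The ``moreover'' part is where your proposal diverges and remains incomplete. The paper does \emph{not} attempt to realise $h$ directly in the matrix picture of \Cref{class2main}; as you correctly observe, $h\varepsilon$ is diagonal, so the hypothesis $|A_{21}d|\geq 4|z_{k_n}z_{k_m}|$ of \Cref{prop: generalised Sanov_cyclic} fails for it, and patching this with the bicyclic unit $\beta$ requires a computation you do not carry out (and whose outcome is not obvious). Instead, the paper uses the fact recorded just before \Cref{lemma_order}: a periodic Bovdi unit is \emph{rationally} conjugate to the corresponding power of $h$. Thus there is $v\in\U(\Q G)$ with $u^v=h$, and conjugating the already-established free product gives $\langle h,(u^*uu^*)^v\rangle\cong C_p\star C_\infty$ inside $\U(\Q G)$. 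The passage back to $\U(\Z G)$ is by commensurability of $\Z$-orders: $v^{-1}(\Z G)v$ is a $\Z$-order in $\Q G$, so its unit group and $\U(\Z G)$ are commensurable, whence some power $w=((u^*uu^*)^v)^m$ lies in $\U(\Z G)$ and $\langle h,w\rangle\cong C_p\star C_\infty$.

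So the missing idea in your third step is precisely this rational-conjugacy-plus-commensurability manoeuvre, which sidesteps the matrix obstacle entirely. Your identity $u=h\,b(h^{-1}g,\tilde h)$ is correct but is not used in the paper's proof.
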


\begin{proof}
Because $h$ is not central and of order $p$, the subgroup  $\langle h\rangle$ is not normal in $G$. Hence there exists $g\in G$ such that $g\not\in N_{G}(\langle h \rangle)$. So, by Theorem~\ref{theorem: result finite nilpotent group},  
$\langle u,u^{*}\rangle \cong \langle u \rangle \star \langle u^{*} \rangle \cong C_{p} \star C_{p}$ with $u=b_{1}(g,\tilde{h})$.
Clearly, $\langle u, u^{*}uu^{*}\rangle \cong \langle u \rangle \star \langle u^{*}uu^{*}\rangle \cong C_{p} \star C_{\infty}$.
As mentioned earlier,  there exists a rational unit $v$ such that $u^v=h$. Since $v^{-1}(\Z G)v$ is a $\Z$-order in $\Q G$,
the unit group of this order and $\U (\Z G)$ are commensurable (see for example \cite[Lemma $4.6.9$]{EricAngel1}). Hence, for some positive integer $m$, we get that  $\left((u^{*}uu^{*})^{v}\right)^m\in \U (\Z G)$. So $\langle u^{v}, \left((u^{*}uu^{*})^{v}\right)^m \rangle = \langle h , \left(u^{*}uu^{*})^{v}\right)^m \rangle \cong C_p \star C_{\infty}$. 
Hence, the result follows.
\end{proof}


\begin{thebibliography}{99}
\bibitem{bob-free} V. Bovdi, Free subgroups in group rings, arXiv:1406.6771, 2014, preprint.
\bibitem{Bre} E. Breuillard, On uniform exponential growth for solvable groups, {\it Pure and Applied Math.}, Quart. 3, no. 4, Margulis Volume Part 1, 949--967, 2007.
{\bf 160}, no. 1, 1â€“-30, 2005.
\bibitem{EisKieVGel} F. Eisele, A. Kiefer, I. Van Gelder, Describing units of integral group rings up to commensurability, 
{\it J. Pure Appl. Algebra} {\bf 219}, no. 7, 2901â€“-2916, 2015.
\bibitem{GonPass} J. Z. Gon{\c{c}}alves, and D. S. Passman, Embedding free products in the unit group of an integral group ring, \textit{Arch. Math. (Basel)}, no. 2, Vol 82, 97--102, 2004.
\bibitem{GonAngel} J.Z. GonÃ§alves,{\'A}. del R{\'{\i}}o, A survey on free subgroups in the group of units of group rings, {\it J. Algebra Appl.} {\bf 12}, no. 6, 28 pp, 2013. 
\bibitem{HarPic} B. Hartley, P.F. Pickel, Free subgroups in the unit groups of integral group rings, {\it Canad. J. Math.} {\bf 32}, no. 6, 1342--1352, 1980.
\bibitem{Her} M. Hertweck, A counterexample to the Isomorphism Problem for Integral Group Rings, {\it Ann. of Math. (2)} {\bf 154}, 115--138, 2001.
\bibitem{JesMarNebKim} E. Jespers, W. Kimmerle, Z. Marciniak and G. Nebe, 
Mini-Workshop: Arithmetik von Gruppenringen
[Mini-Workshop: Arithmetic of Group Rings],
Abstracts from the mini-workshop held November
25--December 1, 2007. Organised by Eric Jespers,
Zbigniew Marciniak, Gabriele Nebe and Wolfgang
Kimmerle. Oberwolfach Reports. Vol. 4, no. 4.
Oberwolfach Rep. 4 (2007), no. 4, 3209--3239.
\bibitem{JesLeal} E. Jespers, G. Leal, Generators of large subgroups of the unit group of integral grouprings, {\it Manuscripta Math.} {\bf 78}, 303â€“-315, 1993.
\bibitem{EricAngel1} E. Jespers, and {\'A}. del R{\'{\i}}o, {Group ring groups. Volume 1:  Orders and generic constructions of units}, De Gruyter, Berlin, 2016.
\bibitem{EricAngel2} E. Jespers, and {\'A}. del R{\'{\i}}o, {Group ring groups. Volume 2: Structure theorems of unit groups}, De Gruyter, Berlin, 2016.
\bibitem{MarciniakSehgal} Z. S. Marciniak, and S. K. Sehgal, Constructing free subgroups of integral group ring units, {\it Proc. Amer. Math. Soc.}, no. 4, Vol 125,  1005--1009, 1997.
\bibitem{Pass} D. S. Passman, Free products in linear groups, {\it Proc. AMS} {\bf 132}, 37--46, 2003.
\bibitem{rit-seh} J. Ritter and S.K.  Sehgal, 
Construction of units in integral group rings of finite nilpotent groups, {\it 
Trans. Amer. Math. Soc.} {\bf 324} (1991), no. 2, 603--621. 
\bibitem{Ros} J. Rosenblatt, Invariant measures and growth conditions, {\it Trans. Amer. Math. Soc.} {\bf 193}, 33â€“-53, 1974.
\bibitem{Seh-book} S.K. Sehgal,  Units in integral group rings, Pitman Monographs and Surveys in Pure and Applied Mathematics, 69. Longman Scientific \& Technical, Harlow; copublished in the United States with John Wiley \& Sons, Inc., New York, 1993. 

\end{thebibliography}

\end{document}